\newcounter{minutes}
\newcounter{hours}
\font\fFt=eusm10 
\font\fFa=eusm7  
\font\fFp=eusm5  
\def\K{\mathchoice
{\hbox{\,\fFt K}}
{\hbox{\,\fFt K}}
{\hbox{\,\fFa K}}
{\hbox{\,\fFp K}}}
\newcommand{\M}{\mathsf{M}}
\dedicatory{}
\theoremstyle{plain}
\newtheorem{thm}[equation]{Theorem}
\newtheorem{cor}[equation]{Corollary}
\newtheorem{lem}[equation]{Lemma}
\newtheorem{prop}[equation]{Proposition}
\theoremstyle{definition}
\theoremstyle{remark}
\newtheorem{rem}[equation]{Remark}
\newtheorem{nonsec}[equation]{}
\numberwithin{equation}{section}
\newcommand{\beq}{\begin{equation}}
\newcommand{\eeq}{\end{equation}}
\newcommand{\ben}{\begin{enumerate}}
\newcommand{\een}{\end{enumerate}}
\newcommand{\bequu}{\begin{eqnarray*}}
\newcommand{\eequu}{\end{eqnarray*}}
\newcommand{\bequ}{\begin{eqnarray}}
\newcommand{\eequ}{\end{eqnarray}}
\newcommand{\sh}{\,\textnormal{sh}}
\newcommand{\ch}{\,\textnormal{ch}}
\renewcommand{\th}{\,\textnormal{th}}
\begin{document}
\thispagestyle{empty}
\def\thefootnote{}

%
\title[Hilbert metric and H\"older continuity]
{Hilbert metric and H\"older continuity}

\author[\c{S}. Alt\i nkaya]{\c{S}ahsene Alt\i nkaya}
\address{Department of Mathematics and Statistics,
	University of Turku,\newline FI-20014 Turku, 
	Finland\\
	\url{https://orcid.org/0000-0002-7950-8450}}
\email{sahsene.altinkaya@utu.fi}

\author[M. FUJIMURA]{Masayo FUJIMURA}
\address{Department of Mathematics,
	National Defense Academy of Japan, \newline Yokosuka, Japan \\
	\url{https://orcid.org/0000-0002-5837-8167}}
\email{masayo@nda.ac.jp}

\author[M. MOCANU]{Marcelina Mocanu}
\address{Department of Mathematics and Informatics,
	"Vasile Aleksandi" University of Bac\v{a}u, Bac\v{a}u,  Romania \\
	\url{ https://orcid.org/0000-0002-0192-8180}}
	  \email{mmocanu@ub.ro}
\author[M. Vuorinen]{Matti Vuorinen}   
\address{Department of Mathematics and Statistics,
	University of Turku, \newline
	FI-20014 Turku,
	Finland\\ \url{https://orcid.org/0000-0002-1734-8228}} 
\email{vuorinen@utu.fi}

\date{}

\begin{abstract}
We prove several formulas for the Hilbert metric in the unit disk and
 apply these results to
study quasiregular mappings of the unit disk
$\mathbb{B}^2$ onto a bounded convex domain $D\,.$ The main result deals with the H\"older continuity of 
these mappings with respect to  Hilbert metrics of  $\mathbb{B}^2$ and $D.$ Also several open problems are formulated.
\end{abstract}

\keywords{M\"obius transformations, hyperbolic geometry, Hilbert metric, quasiregular mappings, H\"older continuity}
\subjclass[2010]{30C60}


\maketitle


\footnotetext{\texttt{{\tiny File:~\jobname .tex, printed: \number\year-%
\number\month-\number\day, \thehours.\ifnum\theminutes<10{0}\fi\theminutes}}}
\makeatletter

\makeatother



\section{Introduction}
It is well-known that the hyperbolic metric, and metrics similar to it, have an important role in geometric function theory, see \cite{bm, h,gh, hkv}. In his extensive study on metrics occuring
in geometric function theory,  A. Papadopoulos \cite{p} provides a list of 
twelve metrics that are frequently used in the field \cite[pp.42-49]{p}.
Metrics of hyperbolic type have been applied as a tool and studied intensively by several authors \cite{b3, bm,fkv,gh, himps, kr,p, sop,rv}. All this work leads, on one hand, to the question of comparison inequalities between metrics and, on the other hand, to the challenge of viewing
the classical topics from new points of view.

D. Hilbert  \cite{hilb} introduced one of these metrics defined in a bounded convex domain $D \subset \mathbb{R}^2$ for two distinct points $a,b \in D$ as
 follows.
Let $u$ and $v $ be the points of intersection of the line through the points $a,b$ with
the boundary $\partial D,$ ordered so that the points occur in the order $u,a,b,v$ on
the line. 
Then
the {\it Hilbert distance} $h_D(a,b)$ is defined by
\begin{equation} \label{hildef}
h_D(a,b) =  \log \left| u, a, b, v\right|
\end{equation}  
where
\begin{equation} \label{cr}
	\left| u, a, b, v\right|  = \frac{\left| u - b\right|  \left| a - v\right| }{\left| u - a\right|  \left| b - v\right| }  \,.
\end{equation} 
This metric is the basis of  Hilbert's metric geometry which
is  one of the models of the
hyperbolic geometry \cite{kob}. For a study of geodesics, see Busemann \cite{bu}. This model of geometry has been recently studied  from various points of view, e.g. in
 \cite{p, sop,rv} and in computer graphics  \cite{ni}.  For the special case of the domain $D= \mathbb{B}^2,$ Hilbert's metric is also known as the Cayley-Klein metric. 
 
We continue here our paper \cite{afv}, by proving a H\"older continuity
result for a quasiregular mapping of the unit disk $\mathbb{B}^2$
onto a bounded convex domain and giving
several formulas involving the Hilbert and other metrics.

After this introduction, the contents of the paper is organized as follows. In Section 2 we give some notation and preliminary information and thereafter, 
in Section 3, we give several upper and lower bounds
for the Hilbert metric.
 In Section 4 we study Hilbert circles in polygonal plane domains and for dimensions $n\ge 3,$ give the formulas for Hilbert spheres in $\mathbb{B}^n,$ 
 which are known to be Euclidean ellipsoids, 
 extending the result from \cite{afv} for the case $n=2$ .
 In Section 5 one of the main results is given, we prove a H\"older continuity result
for a quasiregular mapping $f: \mathbb{B}^2 \to D$ onto
a convex domain in terms of the Hilbert metric in the following form.

\begin{thm} \label{qrHolder} Let $f: \mathbb{B}^2 \to f(\mathbb{B}^2) = D$ be a $K$-quasiregular mapping onto a convex bounded domain $D$ and $ a, b \in \mathbb{B}^2 $. Then
	\begin{equation}\label{hm}
		h_D(f(a), f(b)) \leq \frac{2 \,c(K)}{\sqrt{1 - m^2}} \max \left\lbrace h_{\mathbb{B}^2}(a, b), (h_{\mathbb{B}^2}(a, b))^{1/K} \right\rbrace,
	\end{equation}
where $ m $ denotes the Euclidean distance from the origin to the line $ L[a, b] $, and $c(K), $ $c(K) \to 1$ when $K \to 1,$ is as given in Theorem \ref{2.12}.
\end{thm}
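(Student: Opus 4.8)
The plan is to use the hyperbolic (Poincar\'e) metric $\rho$ of each domain as an intermediary between the two Hilbert metrics and to chain together three comparisons. Writing $\rho_{\mathbb{B}^2}$ and $\rho_D$ for the hyperbolic metrics of $\mathbb{B}^2$ and of $D$ (the latter exists since a convex domain is simply connected), I would establish: (i) $h_D(x,y)\le 2\,\rho_D(x,y)$ for all $x,y\in D$, which accounts for the factor $2$; (ii) the quasiregular Schwarz lemma of Theorem \ref{2.12}, in the form $\rho_D(f(a),f(b))\le c(K)\max\{\rho_{\mathbb{B}^2}(a,b),\rho_{\mathbb{B}^2}(a,b)^{1/K}\}$; and (iii) $\rho_{\mathbb{B}^2}(a,b)\le \frac{1}{\sqrt{1-m^2}}\,h_{\mathbb{B}^2}(a,b)$, which accounts for the factor $1/\sqrt{1-m^2}$. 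Concatenating (i)--(iii) and absorbing the exponent into the maximum then yields \eqref{hm}.

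For step (i) I would argue infinitesimally. At a point $z\in D$ and a direction $v$ with $|v|=1$, expanding the cross ratio shows that the Hilbert length element is $\frac{1}{t^+}+\frac{1}{t^-}$, where $t^\pm$ are the distances from $z$ to $\partial D$ along $\pm v$; since $t^\pm\ge \dist(z,\partial D)=:d(z)$, this is at most $2/d(z)$. On the other hand, for a convex domain the hyperbolic density satisfies $\lambda_D(z)\ge 1/d(z)$, obtained by comparing $D$ with the supporting half-plane through the boundary point nearest to $z$ and invoking domain monotonicity of the hyperbolic density. Hence the Hilbert length element is pointwise at most $2\lambda_D(z)$, and integrating this inequality over the hyperbolic geodesic joining $f(a)$ to $f(b)$ gives $h_D(f(a),f(b))\le 2\,\rho_D(f(a),f(b))$.

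Step (ii) is Theorem \ref{2.12}: composing $f$ with a Riemann map $g\colon D\to\mathbb{B}^2$, which is conformal and hence a hyperbolic isometry, reduces matters to a $K$-quasiregular self-map of $\mathbb{B}^2$ and gives $\rho_{\mathbb{B}^2}(g(f(a)),g(f(b)))=\rho_D(f(a),f(b))$. For step (iii) I would parametrise the chord $L[a,b]$, which lies at Euclidean distance $m$ from $0$, as $z=(t,m)$ with $|t|<\sqrt{1-m^2}=:\ell$; a direct computation of the two line elements along this chord shows that the hyperbolic element equals $\frac{1}{\ell}$ times the Hilbert element, so the hyperbolic length of the chord equals $\frac1\ell\,h_{\mathbb{B}^2}(a,b)$, and since a geodesic minimises hyperbolic length we get $\rho_{\mathbb{B}^2}(a,b)\le \frac{1}{\sqrt{1-m^2}}\,h_{\mathbb{B}^2}(a,b)$. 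Finally, because $\sqrt{1-m^2}\le 1$ and $K\ge 1$ force $(\sqrt{1-m^2})^{-1/K}\le(\sqrt{1-m^2})^{-1}$, the single factor $1/\sqrt{1-m^2}$ dominates both the linear and the $1/K$ term when passing the maximum from $\rho_{\mathbb{B}^2}$ to $h_{\mathbb{B}^2}$.

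I expect the only genuinely delicate points to be the two density estimates that fix the constants: the convex lower bound $\lambda_D\ge 1/d$ in step (i) and the exact chord identity in step (iii). Both should follow from the infinitesimal form of the Hilbert metric together with the hyperbolic comparison results recorded earlier in the paper, while the remaining bookkeeping with the maximum and the exponent $1/K$ is routine.
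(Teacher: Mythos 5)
Your proof is correct, and its skeleton --- $h_D\le 2\rho_D$, then the quasiregular Schwarz lemma, then $\rho_{\mathbb{B}^2}\le h_{\mathbb{B}^2}/\sqrt{1-m^2}$, then absorbing the factor into the maximum --- is exactly the paper's chain; but you prove the two comparison steps by genuinely different arguments. The paper obtains step (i) by inserting the Apollonian metric: $h_D\le\alpha_D$ trivially for bounded convex $D$, and $\alpha_D\le 2\rho_D$ by Beardon's theorem (Theorem \ref{Bthm}); you instead compare Finsler densities pointwise, $\frac{1}{t^+}+\frac{1}{t^-}\le\frac{2}{d(z)}\le 2\lambda_D(z)$ with $d(z)=\dist(z,\partial D)$, which is consistent with the paper's normalization of $h_D$ (no factor $\frac12$ in the cross-ratio logarithm). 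The paper obtains step (iii) from the functional identity $\sh(h_{\mathbb{B}^2}/2)=\sqrt{1-m^2}\,\sh(\rho_{\mathbb{B}^2}/2)$ of Theorem \ref{4.2} combined with the monotone-ratio argument of Proposition \ref{FujiProp} (Corollary \ref{hrho}); you recover the same bound by the exact density computation along the chord, which is a nice, short, self-contained alternative. Two facts you use silently should be made explicit or cited: first, that the Hilbert metric of a convex domain is an intrinsic (length) metric, so that $h_D(x,y)$ is bounded by the Hilbert length of \emph{any} rectifiable curve, in particular of the hyperbolic geodesic rather than just the straight segment (classical, due to Busemann; see also Papadopoulos); second, the convexity bound $\lambda_D\ge 1/d$ via supporting half-planes and domain monotonicity of hyperbolic densities. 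What the paper's detour through $\alpha_D$ buys is the connection to its open problem \ref{hilApo}: an affirmative answer there would replace $2c(K)$ by $c(K)$ (Remark \ref{sharpness}), a potential sharpening that is invisible in your pointwise estimate, since the two inequalities $t^{\pm}\ge d(z)$ and $\lambda_D\ge 1/d$ cannot be simultaneously sharp.
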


A few remarks are in order to clarify some special cases of this result. First, if $K=1$
$K$-quasiregular mappings are analytic functions of function theory. Second,  injective $K$-quasiregular mappings are $K$-quasiconformal mappings. We believe that Theorem \ref{qrHolder} is new
in both cases. Yet, there might be room
for improvement in Theorem \ref{qrHolder}, see Remark \ref{sharpness}.

It should be also noticed that
in the course of this work, we formulate several open problems.

\section{Preliminary facts} \label{sec2}
%
In this section we give some basic notation and facts that will be utilized in the paper.

The $ n $-dimensional unit ball is denoted by $ \mathbb{B}^n $, while the unit sphere in $\mathbb{R}^n $ is $ \mathbb{S}^{n-1} $. A ball with center
$x$ and radius $r$ is $B^n(x,r)$ and its boundary sphere $S^{n-1}(x,r).$
For $ a \in \mathbb{R}^n \setminus \left\lbrace 0\right\rbrace  $, let 
\( a^* = \frac{a}{|a|^2} \). 
We identify $  \mathbb{R}^2 $ with the complex plane $ \mathbb{C} $
and for  $ a \in \mathbb{R}^2 $, the complex conjugate $ \overline{a} $ of $ a $  is $\overline{a} = \operatorname{Re}(a) - \operatorname{Im}(a)i.$


Assume that $ L[a, b] $ represents the line passing through points $ a $ and $ b $ ($ b \neq a $). For distinct points $ a, b, c, d \in \mathbb{C}$, if the lines $ L[a, b] $ and $ L[c, d] $ intersect at a single point $ w $, then we use the symbol $\text{LIS}[a, b, c, d]$ for this point of intersection

\begin{equation*}
w = \text{LIS}[a, b, c, d] = L[a, b] \cap L[c, d].
\end{equation*}
The formula for $w$ is  (see e.g., \cite[Ex. 4.3(1), p. 57 and p. 373]{hkv})

\begin{equation}\label{2.1}
	w = \text{LIS}[a, b, c, d] = \frac{( \overline{a}b -a\overline{b} )(c - d) - (a - b)(\overline{c}d -c \overline{d}  )}{(\overline{a} - \overline{b})(c - d) 
	-(a - b)(\overline{c} - \overline{d})}.
\end{equation}
 
For $a,b,c,d\in \partial \mathbb{B}^{2}$, using $a\overline{a}=b\overline{b}%
=c\overline{c}=d\overline{d}=1$, we obtain 
\begin{equation}
\mathrm{LIS}\left[ a,b,c,d\right] =\frac{ab\left( c+d\right) -cd\left(
a+b\right) }{ab-cd}=\frac{\overline{a}+\overline{b}-\overline{c}-\overline{d}%
}{\overline{a}\overline{b}-\overline{c}\overline{d}}.  \label{LIScircle}
\end{equation}

Let $ C[a, b, c]$ be the unique circle through three distinct, non-collinear points $ a $, $ b $, and $ c $. The formula (\ref{2.1}) easily yields a formula for the center and radius of this circle.

A M\"obius transformation is a mapping of the form
\begin{equation*}
	z \mapsto \frac{az + b}{cz + d}, \ \ \ a, b, c, d, z \in \mathbb{C}, \ ad - bc \neq 0.
\end{equation*}
The most important features of M\"obius transformations are that they  preserve 
the cross-ratio and the angle magnitude, and, because of this, they map every Euclidean line or circle onto either a line or a circle. The special M\"obius transformation
\begin{equation} \label{mb}
	T_a(z) = \frac{z - a}{1 - \overline{a}z}, \ \ \ a \in \mathbb{B}^2 \setminus \left\lbrace 0\right\rbrace 
\end{equation}
maps the unit disk $ \mathbb{B}^2 $ onto itself with
\begin{equation*}
	T_a(a) = 0, \ \ \ T_a\left(\pm \frac{a}{\left| a\right| }\right) = \pm \frac{a}{\left| a\right| }.
\end{equation*}


\begin{nonsec}{\bf Hyperbolic geometry.}\label{hg}
We review fundamental formulas and notation for hyperbolic geometry, for more information, see \cite{b}.

The hyperbolic metrics of the unit disk ${\mathbb{B}^2}$ and the upper half plane  ${\mathbb{H}^2}$ are given, respectively, by
\begin{equation}\label{rhoB}
\sh \frac{\rho_{\mathbb{B}^2}(a,b)}{2}=
\frac{\left| a-b\right| }{\sqrt{(1-\left| a\right| ^2)(1-\left| b\right| ^2)}} ,\ \ \ a,b\in \mathbb{B}^2,
\end{equation}
and
\begin{equation}\label{rhoH}
{\rm ch}\rho_{\mathbb{H}^2}(a,b)=1+ \frac{\left| a-b\right| ^2}{2 {\rm Im}(a) {\rm Im}(b)},\ \ \ a,b\in \mathbb{H}^2\,.
\end{equation}
Both metrics are M\"obius invariant: if $G, D \in \left\lbrace  \mathbb{B}^2, \mathbb{H}^2\right\rbrace $
and $f:G \to D= f(G)$ is a M\"obius transformation, then  $\rho_G(a,b)= \rho_D(f(a),f(b))$ holds for all $a,b \in G$. The geodesic line through
$a,b \in G$ is an arc of a circle, perpendicular to $\partial G\,.$ The end points of this arc on
$\partial G$ are $a_*$ and $b_*$ with $|a-a_*|\le |b-a_*|.$ In terms of these end points we have \cite[(4.9), (4.17)]{hkv}
\begin{equation} \label{endptrho}
\rho_G(a,b) = \log |a_*, a,b, b_*|\,.
\end{equation}
\end{nonsec}

We shall use the fact that a hyperbolic disk $ B_\rho(x,M)$ with the center $x \in {\mathbb{B}}^2$ and the radius $M>0$ is a Euclidean disk with the following
center and radius \cite[p. 56, (4.20)]{hkv}
\begin{equation}\label{hkv420}
 \begin{cases}
        B_\rho(x,M)=B^2(y,r)\;,&\\
  \noalign{\vskip5pt}
      {\displaystyle y=\frac{x(1-t^2)}{ 1-|x|^2t^2}\;,\;\;
        r=\frac{(1-|x|^2)t}{1-|x|^2t^2}\;,\;\;t={\rm th} ( M/2)\;.}&
\end{cases}
\end{equation}



Above ${\rm sh},$ ${\rm ch},$ and ${\rm th}$ stand for the hyperbolic sine, cosine, and tangent,  
and their inverse functions are ${\rm arsh},$
${\rm arch,}$ and ${\rm arth},$ respectively. We also use the notation
\[A[a,b]=    \sqrt{|a-b|^2 +(1-|a|^2)(1-|b|^2) } =  \sqrt{1+\left\vert a\right\vert ^{2}\left\vert b\right\vert
^{2}-2a\cdot b }\,.\]
Here the dot product of two points $ a, b \in \mathbb{R}^n $ is denoted by $ a \cdot b$.
For complex numbers $a,b$ we have $A[a,b]= | 1-\overline{a} b |.$

\begin{lem} \label{simpleidty}
The following  identity holds
for all $a,b \in \mathbb{B}^n$
\begin{equation} \label{SIDEN}
\left\vert a\left( 1-\left\vert b\right\vert ^{2}\right) +b\left(
1-\left\vert a\right\vert ^{2}\right) \right\vert ^{2}=\left( 1-\left\vert
a\right\vert ^{2}\left\vert b\right\vert ^{2}\right) ^{2}-\left(
1-\left\vert a\right\vert ^{2}\right) \left( 1-\left\vert b\right\vert
^{2}\right) A[a,b]^{2}. 
\end{equation}
\end{lem}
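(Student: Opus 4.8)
The plan is to turn this vector identity into an elementary polynomial identity in three scalar unknowns. I would introduce the abbreviations $p=|a|^2$, $q=|b|^2$ and $s=a\cdot b$, so that every quantity appearing in \eqref{SIDEN} becomes a polynomial in $p,q,s$ and the dimension $n$ plays no role.

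First, I would simplify $A[a,b]^2$. From $|a-b|^2=|a|^2-2\,a\cdot b+|b|^2=p+q-2s$ together with the definition $A[a,b]^2=|a-b|^2+(1-|a|^2)(1-|b|^2)$ one obtains
\[
A[a,b]^2=p+q-2s+(1-p)(1-q)=1+pq-2s,
\]
which is precisely the second form of $A[a,b]$ recorded just before the lemma.

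Next, I would expand the left-hand side of \eqref{SIDEN}. Writing out the squared modulus of the vector $a(1-q)+b(1-p)$ gives
\[
\bigl|a(1-q)+b(1-p)\bigr|^2=(1-q)^2|a|^2+(1-p)^2|b|^2+2(1-p)(1-q)\,(a\cdot b)=p(1-q)^2+q(1-p)^2+2(1-p)(1-q)s.
\]
The decisive simplification is that the mixed term $2(1-p)(1-q)(a\cdot b)$ is the only place where the dot product survives, and it is matched exactly by the $-2s$ sitting inside $A[a,b]^2$ on the right-hand side; this is what allows the whole vector statement to collapse to a scalar one.

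Finally, I would expand the right-hand side $(1-pq)^2-(1-p)(1-q)(1+pq-2s)$ and check that it agrees, monomial by monomial, with the polynomial just obtained. Both sides reduce to
\[
p+q-4pq+p^2q+pq^2+2s-2ps-2qs+2pqs,
\]
so the identity holds for all $a,b\in\mathbb{B}^n$, and in fact for all $a,b\in\mathbb{R}^n$. The only real obstacle is bookkeeping: each side carries about a dozen monomials in $p,q,s$, so the verification is a matter of careful expansion rather than of any conceptual difficulty. In the planar case $n=2$ one could abbreviate the computation slightly using the complex form $A[a,b]=|1-\overline{a}b|$, but the scalar reduction above already makes the argument routine in every dimension.
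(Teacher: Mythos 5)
Your proof is correct and takes essentially the same route as the paper: both expand the squared norm on the left in terms of $|a|^2$, $|b|^2$, and $a\cdot b$, and then verify the resulting scalar polynomial identity, the paper merely organizing the bookkeeping through an intermediate identity in $|a|^2,|b|^2$ rather than a full monomial-by-monomial comparison. Your closing observation that the identity holds for all $a,b\in\mathbb{R}^n$ is also accurate.
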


\begin{proof}
 Using
$$\left\vert a\left( 1-\left\vert b\right\vert ^{2}\right) +b\left(
1-\left\vert a\right\vert ^{2}\right) \right\vert ^{2}=\left\vert
a\right\vert ^{2}\left( 1-\left\vert b\right\vert ^{2}\right)
^{2}+\left\vert b\right\vert ^{2}\left( 1-\left\vert a\right\vert
^{2}\right) ^{2}+2a\cdot b\left( 1-\left\vert a\right\vert ^{2}\right)
\left( 1-\left\vert b\right\vert ^{2}\right) $$
and the elementary identity

\[\left\vert a\right\vert ^{2}\left( 1-\left\vert b\right\vert ^{2}\right)
^{2}+\left\vert b\right\vert ^{2}\left( 1-\left\vert a\right\vert
^{2}\right) ^{2}-\left( 1-\left\vert a\right\vert ^{2}\left\vert
b\right\vert ^{2}\right) ^{2}=-\left( 1-\left\vert a\right\vert ^{2}\right)
\left( 1-\left\vert b\right\vert ^{2}\right) \left( 1+\left\vert
a\right\vert ^{2}\left\vert b\right\vert ^{2}\right) ,\]
we get \[\left\vert a\left( 1-\left\vert b\right\vert ^{2}\right) +b\left(
1-\left\vert a\right\vert ^{2}\right) \right\vert ^{2}-\left( 1-\left\vert
a\right\vert ^{2}\left\vert b\right\vert ^{2}\right) ^{2}=\]
\[-\left(
1-\left\vert a\right\vert ^{2}\right) \left( 1-\left\vert b\right\vert
^{2}\right) \left( 1+\left\vert a\right\vert ^{2}\left\vert b\right\vert
^{2}-2a\cdot b\right) \,.\]
\end{proof}

\begin{nonsec}{\bf Tangents to hyperbolic geodesics.}
Suppose that $a,b \in \mathbb{B}^2 \setminus \{0\}$ are two points
on  the circle $S(c,\sqrt{|c|^2-1})$ orthogonal to the unit circle with $|a|\neq |b|$ and tangent lines are drawn to  $S(c,\sqrt{|c|^2-1})$ at the points $a$ and $b.$ First, we consider
the problem of finding the point of intersection of
these lines.

Simply, we use \eqref{2.1} to find the point 
\begin{equation} \label{eq:cen}
\text{LIS}[a,a+ i(a-c), b, b+ i (b-c)] = \frac{a(1-|b|^2)+b(1-|a|^2)}{2-a\overline{b}-\overline{a}b} \equiv {\rm cen}
\end{equation}
obtained by symbolic computation. By geometry, it is clear that the point $ cen$ is the Euclidean center of the hyperbolic circle through the points   $a$ and $b$ with the hyperbolic radius 
$\rho_{\mathbb{B}^2}(a,b)/2.$ In conclusion, recalling the formula
for the hyperbolic midpoint $m_{ab}$ of  $a$ and $b$ from \cite[Thm 1.4]{wvz} 
\begin{equation} \label{eq:z}
  m_{ab}   = \frac{a(1-|b|^2)+b(1-|a|^2)}
          {1-|a|^2|b|^2+\sqrt{(1-|a|^2)(1-|b|^2)A[a,b]^2}} \,,
\end{equation}
we have
\[
B^2(cen, |cen-a|)= B_{\rho}(m_{ab},\rho_{\mathbb{B}^2}(a,b)/2) \,.
\]
It is easily seen that the point $p=L[0,cen] \cap [a,b]$ is given by
\begin{equation}
\label{eq:p}
  p   = \frac{a(1-|b|^2)+b(1-|a|^2)}{2-|a|^2-|b|^2} \,.
\end{equation}

\begin{figure}[ht] %
\centerline{
\scalebox{0.7}	
		{\includegraphics{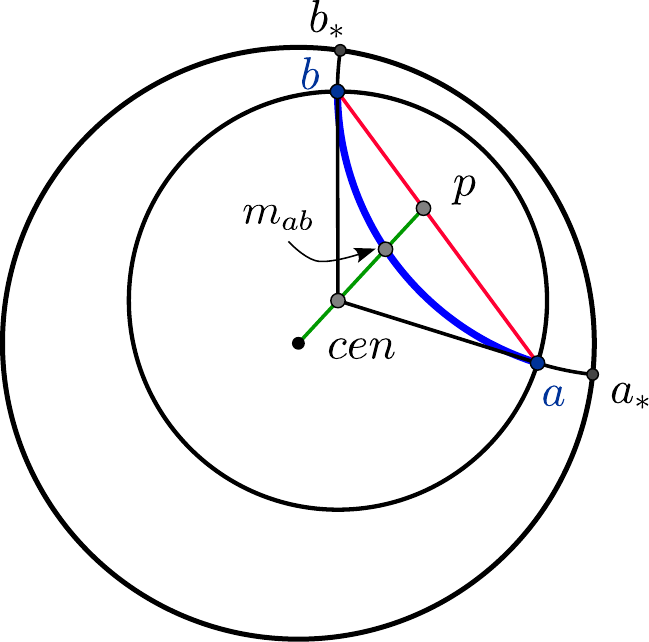}}	
	}
\caption{The points $a,b, {\rm cen},p, m_{ab} $ in Lemma \ref{lemFuji}.}
	\label{fig:abz}
\end{figure}
\bigskip

\bigskip
\end{nonsec}
%

\bigskip

We next apply \eqref{2.1} to prove the following Lemma \ref{lemFuji}.
\bigskip

\begin{lem} \label{lemFuji}
The point $m_{ab}$ is also
the hyperbolic midpoint of the segment $[cen,p],$ i.e.
$ \rho_{ \mathbb{B}^2}(cen,m_{ab})=\rho_{\mathbb{B}^2}(m_{ab},p) $ holds.
 Moreover, 
${B}^{2}(cen,\left\vert cen-a\right\vert )={B}_{\rho
}\left( m_{ab},\frac{\rho \left( a,b\right) }{2}\right) $.
\end{lem}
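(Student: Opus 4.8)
The plan is to use that $\mathrm{cen}$, $m_{ab}$ and $p$ all lie on one diameter of $\mathbb{B}^2$, which is a hyperbolic geodesic, and to reduce the midpoint assertion to a one–dimensional computation along that diameter.

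First I would note that all three points are positive real multiples of the common numerator $N:=a(1-|b|^2)+b(1-|a|^2)$. Since $a\overline b+\overline a b=2\,a\cdot b$, the denominator in (\ref{eq:cen}) equals $2(1-a\cdot b)>0$, and the denominators in (\ref{eq:z}) and (\ref{eq:p}) are positive as well, so $\mathrm{cen}=\lambda_c N$, $m_{ab}=\lambda_m N$, $p=\lambda_p N$ with $\lambda_c,\lambda_m,\lambda_p>0$. The hypothesis $|a|\neq|b|$ guarantees $N\neq 0$ (otherwise $a(1-|b|^2)=-b(1-|a|^2)$ forces $|a|=|b|$), so the three points lie on the ray from $0$ through $N$. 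Along a diameter the hyperbolic distance from $0$ to a point at signed Euclidean distance $s\in(-1,1)$ is $2\operatorname{arth}(s)$ by (\ref{rhoB}), and the hyperbolic midpoint of the points at $s_1,s_2$ sits at $(s_1+s_2)/(1+s_1s_2+\sqrt{(1-s_1^2)(1-s_2^2)})$. Hence the first assertion is equivalent to showing that $s_m:=\lambda_m|N|$ is this midpoint of $s_c:=\lambda_c|N|$ and $s_p:=\lambda_p|N|$.

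The computational core becomes transparent after setting $P=1-|a|^2|b|^2$, $Q=(1-|a|^2)(1-|b|^2)$ and $R=A[a,b]^2$: a direct check gives the clean relations $1/\lambda_c=P+R$, $1/\lambda_m=P+\sqrt{QR}$, $1/\lambda_p=P+Q$, while Lemma \ref{simpleidty} gives exactly $|N|^2=P^2-QR$. Substituting these, I would compute $1-s_c^2=R(2P+Q+R)/(P+R)^2$ and $1-s_p^2=Q(2P+Q+R)/(P+Q)^2$, so that the midpoint's denominator reduces, via $(P+R)(P+Q)+(P^2-QR)=P(2P+Q+R)$, to $(2P+Q+R)(P+\sqrt{QR})/[(P+R)(P+Q)]$. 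The factor $2P+Q+R$ cancels against the numerator $s_c+s_p=|N|(2P+Q+R)/[(P+R)(P+Q)]$, leaving precisely $|N|/(P+\sqrt{QR})=s_m$. The only side conditions, namely $1-a\cdot b>0$, $2-|a|^2-|b|^2>0$ and $P>\sqrt{QR}$ (so that $|N|$ is real and $s_c,s_p\in(0,1)$), all follow from $|a|,|b|<1$. I expect this cancellation to be the main obstacle, but it is an obstacle only of bookkeeping: the substitution $P,Q,R$ together with Lemma \ref{simpleidty} turns a forbidding expansion into a one–line simplification.

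For the final equality I would argue geometrically. By (\ref{hkv420}) the set $B_\rho(m_{ab},\rho_{\mathbb{B}^2}(a,b)/2)$ is a Euclidean disk, and since $m_{ab}$ is the hyperbolic midpoint of $a,b$ both points lie on its boundary, with the geodesic $S(c,\sqrt{|c|^2-1})$ through them serving as a hyperbolic diameter; consequently this circle is orthogonal to $S(c,\sqrt{|c|^2-1})$ at $a$ and at $b$. Orthogonality of two circles means that the tangent to one at a common point passes through the Euclidean center of the other, so the Euclidean center of $B_\rho(m_{ab},\rho_{\mathbb{B}^2}(a,b)/2)$ lies on both tangent lines to $S(c,\sqrt{|c|^2-1})$ at $a$ and $b$ and therefore equals their intersection $\mathrm{cen}$ from (\ref{eq:cen}); its Euclidean radius is then $|\mathrm{cen}-a|$. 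This yields $B^2(\mathrm{cen},|\mathrm{cen}-a|)=B_\rho(m_{ab},\rho_{\mathbb{B}^2}(a,b)/2)$.
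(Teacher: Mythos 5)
Your proposal is correct; the midpoint half follows the paper's strategy with a different tactic, while the disk-equality half takes a genuinely different route. For the midpoint claim, both you and the paper first observe that $\mathrm{cen}$, $m_{ab}$, $p$ are positive multiples of $N=a(1-|b|^2)+b(1-|a|^2)$, hence lie on one ray from the origin, and both reduce the problem to a scalar computation resting on Lemma \ref{simpleidty}; but where the paper characterizes the midpoint as a root of a quadratic equidistance equation in the scalar parameter $Z$ (and then identifies that root with $m_{ab}$, with some care about the ordering of the three points), you substitute the scalars $s_c,s_p$ into the midpoint formula \eqref{eq:z} restricted to a diameter and cancel; your $P,Q,R$ bookkeeping indeed collapses the verification to the identity $(P+R)(P+Q)+(P^2-QR)=P(2P+Q+R)$, and I checked that this algebra, as well as the side conditions ($N\neq 0$ from $|a|\neq|b|$, and $s_c,s_p\in(0,1)$), is right. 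For the equality ${B}^{2}(\mathrm{cen},\vert \mathrm{cen}-a\vert)={B}_{\rho}(m_{ab},\rho(a,b)/2)$, the paper computes algebraically, using \eqref{hkv420} and \eqref{SIDEN}, that $\mathrm{cen}=m_{ab}(1-t^{2})/(1-\vert m_{ab}\vert^{2}t^{2})$ with $t=\th(\rho(a,b)/4)$; you instead argue that the hyperbolic circle centered at $m_{ab}$ through $a$ and $b$ is orthogonal to the geodesic circle $S(c,\sqrt{\vert c\vert^{2}-1})$ (a geodesic through the hyperbolic center meets the circle orthogonally --- standard, and justified in one line by mapping $m_{ab}$ to $0$ with a M\"obius automorphism, since angles are preserved), so that its Euclidean center lies on both tangent lines and is therefore $\mathrm{cen}$ by the definition \eqref{eq:cen}. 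This is exactly a rigorous version of the paper's informal remark, made just after \eqref{eq:cen}, that ``by geometry'' $\mathrm{cen}$ is the Euclidean center of that hyperbolic circle: your argument is shorter and more conceptual, while the paper's algebraic route is self-contained at the level of formulas and produces the explicit M\"obius-type relation between $\mathrm{cen}$ and $m_{ab}$, which the orthogonality argument does not exhibit.
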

\begin{proof}
From \eqref{eq:cen} and \eqref{eq:p},
$$
   \arg cen=\arg p = \arg \big(a(1-|b|^2)+b(1-|a|^2)\big)
$$
hold.
Therefore, the intersection point of the line $ L[cen,p] $
and the hyperbolic line passing through $ a,b $ coincides with
$ m_{ab} $.

We will show that the hyperbolic midpoint of $ cen $  and  
$ p $ coincides with $ m_{ab} $.

Since the hyperbolic distance of 
$ a $ and  $ b $ can be written as 
$ \rho(a,b)=2 {\rm arth}\Big|\dfrac{a-b}{1-\overline{a}\,b}\Big| $,
we will find $ z $ that satisfies 
$ \Big|\dfrac{cen-z}{1-\overline{cen}\, z}\Big|
  =\Big|\dfrac{p-z}{1-\overline{p}\,z}\Big| $.
As this point $ z $ lies on $ L[cen,p] $,
it satisfies $ \arg z=\big(a(1-|b|^2)+b(1-|a|^2)\big) $.
So, we can set,
$$
   cen=\alpha C,\quad p=\alpha P,\quad z=\alpha Z ,
$$
where 
\begin{equation}\label{eq:CP}
    C=\frac{1}{2-\overline{a}b-a\overline{b}}, \ 
    P=\frac{1}{2-|a|^2-|b|^2}, \ \textrm{and} \  Z\in\mathbb{R}, 
\end{equation}
and $ \alpha=a(1-|b|^2)+b(1-|a|^2) $.
Then, we have
$$
    \Big|\dfrac{cen-z}{1-\overline{cen}\, z}\Big|
        =|\alpha|\Big|\dfrac{C-Z}{1-|\alpha|^2CZ}\Big| \quad \textrm{and}\quad
    \Big|\dfrac{p-z}{1-\overline{p}\,z}\Big|
        =|\alpha|\Big|\dfrac{P-Z}{1-|\alpha|^2PZ}\Big|.
$$
Considering the order of the three points $ cen,p,z $,
it suffices to find $Z$ such that 
\begin{equation}\label{eq:ZPC}
   \dfrac{Z-C}{1-|\alpha|^2CZ}=\dfrac{P-Z}{1-|\alpha|^2PZ}.
\end{equation}
Substituting \eqref{eq:CP} into \eqref{eq:ZPC}, we have
$$
  \frac{|a+b|^2-4}{(|a|^2+|b|^2-2)(a\overline{b}+\overline{a}b-2)}\, F=0,
$$
where
$$
  F=\big(a(1-|b|^2)+b(1-|a|^2)\big)
       \big(\overline{a}(1-|b|^2)+\overline{b}(1-|a|^2)\big)Z^2
      -2(1-|ab|^2)Z+1.
$$
The equation $ F=0 $ has the solution
$$
    Z=\frac{1-|ab|^2\pm|1-\overline{a}b|\sqrt{(1-|a|^2)(1-|b|^2)}}
           {\big(a(1-|b|^2)+b(1-|a|^2)\big)
            \big(\overline{a}(1-|b|^2)+\overline{b}(1-|a|^2)\big)}.
$$
Recalling that $ z=\alpha Z $ and considering the order of $ cen,p,z $ again,
we can check that $ z $ coincides with  $ m_{ab} $ 
using the identity (\ref{SIDEN}).

In order to prove that $ {B}_{\rho }\left(m_{ab},\frac{%
\rho \left( a,b\right) }{2}\right) = {B}^{2}\left( cen,\left\vert
cen-a\right\vert \right) $ it suffices to prove that 
$cen=\frac{m_{ab}\left(
1-t^{2}\right) }{1-\left\vert 
m_{ab}\right\vert ^{2}t^{2}}$ for $t=\th \frac{%
\rho \left( a,b\right) }{4}$, which implies $ {B}_{\rho }\left(m_{ab},%
\frac{\rho \left( a,b\right) }{2}\right) = {B}^{2}\left( cen,r\right) $
for some $r>0$. Since $a\in \partial  {B}_{\rho }
\left(m_{ab},\frac{\rho\left( a,b\right) }{2}\right) 
=\partial  {B}^{2}\left( cen,r\right) $,
it follows that $r=\left\vert cen-a\right\vert $.

As $\th \left( \frac{s}{2}\right) =\frac{\th (s)}{1+\sqrt{1-\th ^{2}(s)%
}}$ for every $s\in \mathbb{R}$, it follows that 
\[t=\frac{\left\vert
a-b\right\vert }{A\left[ a,b\right] +\sqrt{\left( 1-\left\vert a\right\vert
^{2}\right) \left( 1-\left\vert b\right\vert ^{2}\right) }}.\] 
Then 
\[t^{2}=
\frac{A\left[ a,b\right] ^{2}-\left( 1-\left\vert a\right\vert ^{2}\right)
\left( 1-\left\vert b\right\vert ^{2}\right) }{\left( A\left[ a,b\right] +%
\sqrt{\left( 1-\left\vert a\right\vert ^{2}\right) \left( 1-\left\vert
b\right\vert ^{2}\right) }\right) ^{2}}=\frac{A\left[ a,b\right] -\sqrt{%
\left( 1-\left\vert a\right\vert ^{2}\right) \left( 1-\left\vert
b\right\vert ^{2}\right) }}{A\left[ a,b\right] +\sqrt{\left( 1-\left\vert
a\right\vert ^{2}\right) \left( 1-\left\vert b\right\vert ^{2}\right) }} \, .\] 

Using identity (\ref{SIDEN}), we get 
\[
\left\vert m_{ab}
\right\vert ^{2}=\frac{1-\left\vert a\right\vert ^{2}\left\vert
b\right\vert ^{2}-A\left[ a,b\right] \sqrt{\left( 1-\left\vert a\right\vert
^{2}\right) \left( 1-\left\vert b\right\vert ^{2}\right) }}{1-\left\vert
a\right\vert ^{2}\left\vert b\right\vert ^{2}+A\left[ a,b\right] \sqrt{%
\left( 1-\left\vert a\right\vert ^{2}\right) \left( 1-\left\vert
b\right\vert ^{2}\right) }}\text{. } 
\]%
Denote $A=A\left[ a,b\right] $, $R=\sqrt{\left( 1-\left\vert a\right\vert
^{2}\right) \left( 1-\left\vert b\right\vert ^{2}\right) }$ and $%
v=1-\left\vert a\right\vert ^{2}\left\vert b\right\vert ^{2}$. Then 
\[
\frac{m_{ab}\left( 1-t^{2}\right) }{1-
\left\vert m_{ab}\right\vert ^{2}t^{2}}=\frac{%
a\left( 1-\left\vert b\right\vert ^{2}\right) +b\left( 1-\left\vert
a\right\vert ^{2}\right) }{1-\left\vert a\right\vert ^{2}\left\vert
b\right\vert ^{2}+A\left[ a,b\right] ^{2}}. 
\]%
But 
\[1-\left\vert a\right\vert ^{2}\left\vert b\right\vert ^{2}+A\left[ a,b%
\right] ^{2}=1-\left\vert a\right\vert ^{2}\left\vert b\right\vert
^{2}+\left\vert a-b\right\vert ^{2}+\left( 1-\left\vert a\right\vert
^{2}\right) \left( 1-\left\vert b\right\vert ^{2}\right) =2-\left( a%
\overline{b}+\overline{a}b\right) ,\] 
hence 
\[\frac{m_{ab}\left( 1-t^{2}\right) }{%
1-\left\vert m_{ab}
\right\vert ^{2}t^{2}}=\frac{a\left( 1-\left\vert b\right\vert
^{2}\right) +b\left( 1-\left\vert a\right\vert ^{2}\right) }{2-\left( a%
\overline{b}+\overline{a}b\right) }=cen,\] 
as claimed.
\end{proof}

\begin{nonsec}{\bf Remark.}
In the limiting case $|a|= |b|=1$
Lemma \ref{lemFuji} yields a well-known result, see \cite[Proposition 3.1]{vw2}.
\end{nonsec}
\section{Hilbert metric in the unit disk}

In this section we discuss the connection of the Hilbert metric
with the hyperbolic and the Euclidean metrics. We  first recall some well-known results and then prove lower and upper bounds for the Euclidean metric 
in terms of the Hilbert metric.  Note that the above metrics $\rho_{\mathbb{B}^2}, \rho_{\mathbb{H}^2},
$ and $h_{\mathbb{B}^2}$ can be used also in the one-dimensional case.

Recalling that ${\rm th}^2 (u/2)= \frac{{\rm ch}\, u-1}{{\rm ch}\, u +1}$ for $u>0$ and \cite[Cor 3.5]{rv}
\[
{\rm ch}(h_{\mathbb{B}^n}(a,b)/2)= \frac{1- a \cdot b}{\sqrt{\left( 1-\left\vert a\right\vert ^{2}\right) \left(
1-\left\vert b\right\vert ^{2}\right) }}
\]
we have
\begin{equation} 
{\rm th} ^{2}\left( \frac{h_{\mathbb{B}^{n}}\left( a,b\right) }{4}\right) =%
\frac{1-a\cdot b-\sqrt{\left( 1-\left\vert a\right\vert ^{2}\right) \left(
1-\left\vert b\right\vert ^{2}\right) }}{1-a\cdot b+\sqrt{\left(
1-\left\vert a\right\vert ^{2}\right) \left( 1-\left\vert b\right\vert
^{2}\right) }} \,. \label{explicit2}
\end{equation}%


The geodesic line through $a, b \in \mathbb{B}^2$ in the Hilbert metric
is a chord of the unit circle joining  the points $u,v \in \partial \mathbb{B}^2$ with
\begin{equation}
u= c+ \frac{(a-b)}{|a-b|} \sqrt{1- |c|^2}\, , \quad v= c+ \frac{(b-a)}{|a-b|} \sqrt{1- |c|^2}\, ,
\end{equation} 
and the Hilbert distance is $\log|u,a,b,v|$  
where $c={\rm LIS}[a,b,0, i(b-a)] \,.$ The points $u,a,b,v$ are on the chord $[u,v]$ in this order.

Unlike the hyperbolic metric $ \rho_{\mathbb{B}^n} $, the Hilbert metric is not invariant under the M\"obius automorphisms of $ \mathbb{B}^n $ as indicated by the following theorem.

\begin{thm} (See \cite[Thm 1.2]{rv}) \label{4.2}
For all $ a, b \in \mathbb{B}^2 $, the following functional identity holds 
between the	Hilbert metric and the hyperbolic metric:
\begin{equation}\label{rveq}
		\sh \left( \frac{h_{\mathbb{B}^2}(a, b)}{2} \right) = 
		\sqrt{1 - m^2} \sh \left( \frac{\rho_{\mathbb{B}^2}(a, b)}{2} \right),
\end{equation}
where $ m $ is the Euclidean distance from the origin to the line $ L[a, b] $. Here equality holds if $m=0.$
\end{thm}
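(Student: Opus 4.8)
The plan is to reduce the identity to an elementary one-dimensional cross-ratio computation, by choosing coordinates adapted to the chord of $\mathbb{B}^2$ carrying $a$ and $b$. Let $c = \mathrm{LIS}[a,b,0,i(b-a)]$ be the foot of the perpendicular from the origin to $L[a,b]$, so that $|c| = m$ and $c$ is orthogonal to the chord direction $e := (b-a)/|b-a|$. Writing $a = c + s\,e$ and $b = c + t\,e$, one has $t-s = |b-a| > 0$, and the two endpoints of the chord on $\partial\mathbb{B}^2$ are $u = c - \ell\,e$ and $v = c + \ell\,e$, where $\ell := \sqrt{1-m^2}$ is the half-length of the chord. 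In these coordinates the four points lie on $[u,v]$ in the order $u,a,b,v$, i.e.\ $-\ell < s < t < \ell$.

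First I would compute the four Euclidean distances entering the cross-ratio \eqref{cr}. From $u-a = -(\ell+s)e$ and the analogous expressions one gets $|u-a|=\ell+s$, $|u-b|=\ell+t$, $|a-v|=\ell-s$, $|b-v|=\ell-t$, hence
\[
|u,a,b,v| = \frac{(\ell+t)(\ell-s)}{(\ell+s)(\ell-t)}.
\]
Writing $X = (\ell+t)(\ell-s)$ and $Y = (\ell+s)(\ell-t)$, the definition $h_{\mathbb{B}^2}(a,b)=\log|u,a,b,v|$ gives
\[
\sh\frac{h_{\mathbb{B}^2}(a,b)}{2} = \tfrac{1}{2}\Bigl(\sqrt{X/Y}-\sqrt{Y/X}\Bigr) = \frac{X-Y}{2\sqrt{XY}}.
\]
A one-line expansion yields $X-Y = 2\ell(t-s)$ and $XY = (\ell^2-s^2)(\ell^2-t^2)$, so that
\[
\sh\frac{h_{\mathbb{B}^2}(a,b)}{2} = \frac{\ell\,(t-s)}{\sqrt{(\ell^2-s^2)(\ell^2-t^2)}}.
\]

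The final step is to recognize the right-hand side as the hyperbolic expression. Since $c\cdot e = 0$, the Pythagorean relation gives $|a|^2 = m^2+s^2$ and $|b|^2 = m^2+t^2$, whence $\ell^2-s^2 = 1-|a|^2$ and $\ell^2-t^2 = 1-|b|^2$; moreover $t-s = |a-b|$. Substituting these together with $\ell=\sqrt{1-m^2}$ and comparing with \eqref{rhoB} yields the claimed identity. When $m=0$ the line $L[a,b]$ passes through the origin, the chord is a diameter, and $h_{\mathbb{B}^2}=\rho_{\mathbb{B}^2}$; this is exactly the equality case.

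I expect the only delicate point to be the bookkeeping: fixing the orientation of $e$ and the order $u,a,b,v$ so that every distance comes out with the correct sign, and checking $c\cdot e = 0$, which is what makes $c$ the foot of the perpendicular and supplies the clean relations $|a|^2 = m^2+s^2$. An alternative and even shorter route is to start from the formula $\ch(h_{\mathbb{B}^2}(a,b)/2) = (1-a\cdot b)/\sqrt{(1-|a|^2)(1-|b|^2)}$ recalled before \eqref{explicit2}, apply $\sh^2 = \ch^2-1$, and simplify the numerator to $|a-b|^2 - \bigl(|a|^2|b|^2-(a\cdot b)^2\bigr)$; the planar identity $|a|^2|b|^2-(a\cdot b)^2 = (a_1b_2-a_2b_1)^2$ combined with $m = |a_1b_2-a_2b_1|/|a-b|$ then produces the factor $1-m^2$ directly.
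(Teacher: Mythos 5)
Your proof is correct, but note that the paper itself gives no argument for this statement: Theorem \ref{4.2} is stated with a direct citation to \cite[Thm 1.2]{rv}, so what you have produced is a self-contained replacement for that citation rather than a parallel to an existing proof in the text. Your main route is sound: with $c=\mathrm{LIS}[a,b,0,i(b-a)]$ one indeed has $c\cdot e=0$, hence the chord endpoints are $c\pm\ell e$ with $\ell=\sqrt{1-m^2}$, and the computations $X-Y=2\ell(t-s)$, $XY=(\ell^2-s^2)(\ell^2-t^2)$, together with the Pythagorean relations $\ell^2-s^2=1-|a|^2$, $\ell^2-t^2=1-|b|^2$ and $t-s=|a-b|$, reduce \eqref{rveq} exactly to the definition \eqref{rhoB}; the equality case $m=0$ is then immediate since $\sqrt{1-m^2}=1$ forces $h_{\mathbb{B}^2}=\rho_{\mathbb{B}^2}$. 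Your alternative route, starting from $\ch\left(h_{\mathbb{B}^2}(a,b)/2\right)=(1-a\cdot b)/\sqrt{(1-|a|^2)(1-|b|^2)}$, applying $\sh^2=\ch^2-1$, and using the Lagrange identity $|a|^2|b|^2-(a\cdot b)^2=(a_1b_2-a_2b_1)^2$ with $m=|a_1b_2-a_2b_1|/|a-b|$, also checks out, but it is less self-contained: that $\ch$ formula is itself quoted from \cite[Cor 3.5]{rv}, whereas the chord-coordinate argument needs only the definitions \eqref{hildef}--\eqref{cr} and \eqref{rhoB}. This is what your first argument buys: a complete elementary proof from the definitions, which the paper delegates entirely to the reference.
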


\bigskip
%

\begin{prop} \label{FujiProp} For $t>0, c>0$ let
$f(t) = 2 \, {\rm arsh} (c \, {\rm sh}(\frac{t}{2}))$ and $g(t) = ct\,.$ Then for $c \in (0,1)$ the function
$f(t)/g(t): (0,\infty)\to (1, 1/c)$ is increasing and for $c>1$ the function $f(t)/g(t): (0,\infty)\to ( 1/c,1)$ is decreasing. In particular, for $c \in (0,1), t \ge 0$
we have
\[
 ct \le  2 \, {\rm arsh} (c \, {\rm sh}(\frac{t}{2})) \le t 
\]
and for $c >1, t \ge 0$
\[t \le  2 \, {\rm arsh} (c \, {\rm sh}(\frac{t}{2})) \le ct \,.
\]
\end{prop}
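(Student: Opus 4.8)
The plan is to apply the monotone form of l'Hôpital's rule. Since $f(0)=g(0)=0$, both functions are continuous on $[0,\infty)$ and differentiable on $(0,\infty)$ with $g'(t)=c\neq 0$, the quotient $f/g$ inherits the monotonicity of $f'/g'$. Thus it suffices to analyse the sign of the variation of $f'/g'$ and to compute the two boundary limits of $f/g$, which will simultaneously yield monotonicity, the range, and the stated inequalities.

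First I would differentiate. Using $\frac{d}{dt}{\rm arsh}(u)=u'/\sqrt{1+u^2}$ with $u=c\,{\rm sh}(t/2)$, one obtains
\[
f'(t)=\frac{c\,{\rm ch}(t/2)}{\sqrt{1+c^2{\rm sh}^2(t/2)}},\qquad g'(t)=c,
\]
so that
\[
\frac{f'(t)}{g'(t)}=\frac{{\rm ch}(t/2)}{\sqrt{1+c^2{\rm sh}^2(t/2)}}.
\]
Squaring and writing $w={\rm sh}^2(t/2)$, an increasing function of $t$ on $(0,\infty)$, together with the identity ${\rm ch}^2={\rm sh}^2+1$, reduces this to the rational function
\[
\psi(w)=\frac{1+w}{1+c^2 w},\qquad \psi'(w)=\frac{1-c^2}{(1+c^2w)^2}.
\]
Hence $\psi$ is increasing when $c\in(0,1)$ and decreasing when $c>1$; since $t\mapsto{\rm sh}^2(t/2)$ is increasing and the positive square root is monotone, $f'/g'$ is increasing for $c\in(0,1)$ and decreasing for $c>1$. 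By the monotone l'Hôpital rule, $f/g$ has the same monotonicity, which is where the dichotomy in the sign of $1-c^2$ enters.

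It remains to identify the range. As $t\to 0^+$ one has $c\,{\rm sh}(t/2)\sim ct/2$ and ${\rm arsh}(x)\sim x$, giving $f(t)/g(t)\to 1$. As $t\to\infty$, using ${\rm sh}(t/2)\sim \tfrac12 e^{t/2}$ and ${\rm arsh}(x)\sim\log(2x)$ one finds $f(t)=t+2\log c+o(1)$, whence $f(t)/g(t)\to 1/c$. Combining the monotonicity with these two limits pins down the range as $(1,1/c)$ for $c\in(0,1)$ and $(1/c,1)$ for $c>1$, and the two-sided inequalities follow at once by clearing denominators, recalling $g(t)=ct$. The only steps requiring care are the asymptotic analysis at infinity and the verification of the vanishing/continuity hypotheses needed to invoke the monotone l'Hôpital rule; the monotonicity itself is made transparent by the substitution $w={\rm sh}^2(t/2)$, so I expect no serious obstacle beyond bookkeeping.
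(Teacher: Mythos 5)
Your proof is correct and follows essentially the same route as the paper: compute $f'/g' = \ch(t/2)/\sqrt{1+c^2\sh^2(t/2)}$, observe its monotonicity in $t$ according to the sign of $1-c^2$, and invoke the monotone l'H\^opital rule (the paper cites \cite[Thm B.2, p. 465]{hkv} for exactly this step). Your additional details---the substitution $w=\sh^2(t/2)$ to make the monotonicity of $f'/g'$ transparent, and the explicit limits $f/g\to 1$ at $0^+$ and $f/g\to 1/c$ at $\infty$ pinning down the range---are correct and in fact supply the endpoint analysis that the paper leaves implicit.
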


\begin{proof} Differentiation yields
	\[
	\frac{f'(t)}{g^\prime(t)} =  \frac{\ch(t/2)}{\sqrt{1 + c^2 \sh^2(t/2)}} = \quad \sqrt{\frac{1 + u^2}{1 + c^2 u^2}}; \quad u = \operatorname{sh}(t/2)
	\]
	and hence \( \frac{f'(t)}{g'(t)} \) is increasing for \( c \in (0,1) \) and decreasing \( c >1 \). 
Now, from \cite[Thm B.2, p. 465]{hkv}, it
follows that also the function \(
\frac{f(t)}{g(t)} \) has the same monotonicity property. In particular, the conclusion holds.
\end{proof}

\begin{cor} \label{hrho} For all $ a, b \in \mathbb{B}^2 $, the following inequality holds 
between the	Hilbert metric and the hyperbolic metric:
\begin{equation}\label{rhohil}
h_{\mathbb{B}^2}(a, b) \le \rho_{\mathbb{B}^2}(a, b) \le \frac{h_{\mathbb{B}^2}(a, b)}{\sqrt{1 - m^2}}
		\end{equation}
where $ m $ is the Euclidean distance from the origin to the line $ L[a, b] .$ Here equality holds if $m=0.$
\end{cor}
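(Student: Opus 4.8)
The plan is to derive the Corollary directly from Theorem \ref{4.2} together with Proposition \ref{FujiProp}, with essentially no new computation. First I would observe that since the line $L[a,b]$ passes through the interior points $a,b\in\mathbb{B}^2$, its Euclidean distance $m$ from the origin satisfies $0\le m<1$; hence the quantity $c:=\sqrt{1-m^2}$ lies in $(0,1]$, and $c=1$ precisely when $m=0$. Writing $t:=\rho_{\mathbb{B}^2}(a,b)$, I would then solve the functional identity \eqref{rveq} for the Hilbert distance, obtaining
\[
h_{\mathbb{B}^2}(a,b)=2\,\arsh\!\left(\sqrt{1-m^2}\,\sh\frac{\rho_{\mathbb{B}^2}(a,b)}{2}\right)=2\,\arsh\!\left(c\,\sh\tfrac{t}{2}\right)=f(t),
\]
exactly the function $f$ of Proposition \ref{FujiProp} evaluated at $t$ with parameter $c$.

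Next, for $m\in(0,1)$ we have $c\in(0,1)$, so the two-sided estimate in Proposition \ref{FujiProp} applies and gives $ct\le f(t)\le t$, that is,
\[
\sqrt{1-m^2}\,\rho_{\mathbb{B}^2}(a,b)\ \le\ h_{\mathbb{B}^2}(a,b)\ \le\ \rho_{\mathbb{B}^2}(a,b).
\]
The right-hand inequality is precisely the left inequality of \eqref{rhohil}, while dividing the left-hand inequality by $\sqrt{1-m^2}>0$ produces the right inequality of \eqref{rhohil}. Thus both bounds in \eqref{rhohil} follow at once.

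Finally I would dispose of the boundary case $m=0$, where $c=1$ and \eqref{rveq} reduces to $\sh(h_{\mathbb{B}^2}(a,b)/2)=\sh(\rho_{\mathbb{B}^2}(a,b)/2)$; since $\sh$ is strictly increasing this forces $h_{\mathbb{B}^2}(a,b)=\rho_{\mathbb{B}^2}(a,b)$, so both inequalities hold with equality, as claimed. I do not expect a genuine obstacle here: the entire argument is a translation of Proposition \ref{FujiProp} through the identity \eqref{rveq}. The only step meriting an explicit word is verifying that $m<1$, so that $c\in(0,1]$ and the monotonicity hypothesis $c\in(0,1)$ of Proposition \ref{FujiProp} is met whenever $m>0$.
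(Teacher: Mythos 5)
Your proposal is correct and is essentially the paper's own proof: the paper derives Corollary \ref{hrho} exactly by combining the identity \eqref{rveq} of Theorem \ref{4.2} with the two-sided bound $ct \le 2\,\arsh\bigl(c\,\sh\tfrac{t}{2}\bigr) \le t$ of Proposition \ref{FujiProp} for $c=\sqrt{1-m^2}\in(0,1)$, which is what you do. Your additional remarks --- checking $m<1$ so that $c\in(0,1]$, and treating $m=0$ (where \eqref{rveq} directly forces $h_{\mathbb{B}^2}=\rho_{\mathbb{B}^2}$) separately since the proposition's hypothesis is $c\in(0,1)$ --- simply make explicit details the paper leaves to the reader.
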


\begin{proof}
The proof follows from Theorem \ref{4.2} and Proposition \ref{FujiProp}.
\end{proof}
\bigskip

\begin{cor}
For all $a,b\in \mathbb{B}^{n}$, 
\begin{equation}
{\rm th} \left( \frac{h_{\mathbb{B}^{n}}\left( a,b\right) }{4}\right) \geq 
\frac{\left\vert a-b\right\vert }{\sqrt{4-\left\vert a+b\right\vert ^{2}}}. 
\label{Est2h}
\end{equation}
Here equality holds if $a=-b.$
\end{cor}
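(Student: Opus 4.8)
The plan is to start from the explicit expression \eqref{explicit2} for $\th^2\big(h_{\mathbb{B}^n}(a,b)/4\big)$ and reduce the claimed inequality to a single application of the arithmetic--geometric mean inequality. Since $h_{\mathbb{B}^n}(a,b)\ge 0$ the left-hand side $\th\big(h_{\mathbb{B}^n}(a,b)/4\big)$ is nonnegative, and the right-hand side is manifestly nonnegative, so it suffices to prove the squared inequality
\[
\frac{1-a\cdot b-R}{1-a\cdot b+R}\ \ge\ \frac{|a-b|^2}{4-|a+b|^2},
\]
where I abbreviate $R=\sqrt{(1-|a|^2)(1-|b|^2)}$ and insert \eqref{explicit2} on the left. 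Using $|a-b|^2=|a|^2+|b|^2-2\,a\cdot b$ and $|a+b|^2=|a|^2+|b|^2+2\,a\cdot b$ rewrites the right-hand side purely in terms of $|a|^2,|b|^2$ and $a\cdot b$.

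Next I would clear denominators. Both denominators are strictly positive: since $|a|,|b|<1$ we have $a\cdot b\le |a|\,|b|<1$, hence $1-a\cdot b+R>0$, and $|a+b|\le|a|+|b|<2$ gives $4-|a+b|^2>0$. Therefore cross-multiplication is legitimate and preserves the inequality, reducing the claim to the nonnegativity of
\[
D:=(1-a\cdot b-R)\,(4-|a+b|^2)-|a-b|^2\,(1-a\cdot b+R).
\]
Grouping the terms that carry the factor $(1-a\cdot b)$ separately from those carrying $R$, the $a\cdot b$-dependence in the two brackets telescopes and $D$ collapses to the clean factorization
\[
D=2\,(1-a\cdot b)\Big[(1-|a|^2)+(1-|b|^2)-2R\Big].
\]

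Finally, since $(1-|a|^2)$ and $(1-|b|^2)$ are nonnegative, AM--GM gives $(1-|a|^2)+(1-|b|^2)\ge 2\sqrt{(1-|a|^2)(1-|b|^2)}=2R$, so the bracket is $\ge 0$; together with $1-a\cdot b>0$ this yields $D\ge 0$ and hence the inequality \eqref{Est2h}. Equality in AM--GM holds precisely when $1-|a|^2=1-|b|^2$, i.e. $|a|=|b|$; in particular $a=-b$ forces $|a|=|b|$ and therefore equality, as asserted. The only genuine work is the expansion leading to the factored form of $D$, and the main thing to verify carefully is the complete cancellation of the $a\cdot b$-terms inside the bracket, since a sign slip there is the likeliest source of error; once that cancellation is confirmed the result is immediate.
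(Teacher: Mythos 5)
Your proof is correct, and it is essentially the paper's argument in a different algebraic dress: both start from \eqref{explicit2} and both reduce the claim to the single AM--GM inequality $2\sqrt{(1-|a|^2)(1-|b|^2)}\le (1-|a|^2)+(1-|b|^2)$. The paper gets there by noting that $t\mapsto\frac{k-t}{k+t}$ is decreasing and substituting the AM--GM upper bound for $R$ directly into \eqref{explicit2}, whereas you cross-multiply and verify the factorization $D=2(1-a\cdot b)\left[(1-|a|^2)+(1-|b|^2)-2R\right]$, which I checked and is correct. One place where your route gives a little more: since $D$ vanishes exactly when AM--GM is an equality, you obtain that equality in \eqref{Est2h} holds precisely when $|a|=|b|$, which is sharper than the paper's statement (equality if $a=-b$), proved there by appealing to Theorem \ref{4.2} rather than by inspecting the algebra.
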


\begin{proof}
The function $t\mapsto \frac{k-t}{k+t}$ with $k>0$ is decreasing for $t\in
\lbrack 0,\infty ).$

By the mean inequality $\sqrt{\left( 1-\left\vert a\right\vert ^{2}\right)
\left( 1-\left\vert b\right\vert ^{2}\right) }\leq 1-\frac{\left\vert
a\right\vert ^{2}+\left\vert b\right\vert ^{2}}{2}$. Then by \eqref{explicit2}

\[
{\rm th} ^{2}\left( \frac{h_{\mathbb{B}^{n}}\left( a,b\right) }{4}\right) =%
\frac{1-a\cdot b-\sqrt{\left( 1-\left\vert a\right\vert ^{2}\right) \left(
1-\left\vert b\right\vert ^{2}\right) }}{1-a\cdot b+\sqrt{\left(
1-\left\vert a\right\vert ^{2}\right) \left( 1-\left\vert b\right\vert
^{2}\right) }}\geq \frac{1-a\cdot b-\left( 1-\frac{\left\vert a\right\vert
^{2}+\left\vert b\right\vert ^{2}}{2}\right) }{1-a\cdot b+\left( 1-\frac{%
\left\vert a\right\vert ^{2}+\left\vert b\right\vert ^{2}}{2}\right) }. 
\]%
But $$1-a\cdot b-\left( 1-\frac{\left\vert a\right\vert ^{2}+\left\vert
b\right\vert ^{2}}{2}\right) =\frac{\left\vert a-b\right\vert ^{2}}{2}, \quad 
1-a\cdot b+\left( 1-\frac{\left\vert a\right\vert ^{2}+\left\vert
b\right\vert ^{2}}{2}\right) =2-\frac{\left\vert a+b\right\vert ^{2}}{2}\,,$$
hence (\ref{Est2h}) follows. The equality statement follows from Theorem \ref{4.2}.
\end{proof}

\begin{cor} \label{mamo1} For $a,b \in {\mathbb{B}^n}$ we have
\begin{equation}
{\rm th} \left( \frac{\rho_{\mathbb{B}^{n}}\left( a,b\right) }{4}\right) \geq 
{\rm th} \left( \frac{h_{\mathbb{B}^{n}}\left( a,b\right) }{4}\right) \geq 
\frac{\left\vert a-b\right\vert }{\sqrt{4-\left\vert a+b\right\vert ^{2}}}\,,
\label{mamo2}
\end{equation}
with equality if $a=-b\,.$
\end{cor}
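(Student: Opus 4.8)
The right-hand inequality in \eqref{mamo2}, together with its equality statement, is exactly the content of \eqref{Est2h} proved in the preceding corollary, so the plan is to establish only the left-hand inequality
\[
{\rm th}\left(\frac{\rho_{\mathbb{B}^{n}}(a,b)}{4}\right)\geq{\rm th}\left(\frac{h_{\mathbb{B}^{n}}(a,b)}{4}\right).
\]
The idea is to write both sides in the common form $(X-R)/(X+R)$ with the same $R=\sqrt{(1-|a|^{2})(1-|b|^{2})}>0$ and then compare them through a single monotone function, so that the entire inequality collapses to the Cauchy--Schwarz inequality.

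First I would make the hyperbolic side explicit. Starting from \eqref{rhoB}, which holds verbatim in $\mathbb{B}^{n}$, one has ${\rm ch}(\rho_{\mathbb{B}^{n}}(a,b)/2)=\sqrt{1+{\rm sh}^{2}(\rho_{\mathbb{B}^{n}}(a,b)/2)}=A[a,b]/R$, where $A[a,b]=\sqrt{|a-b|^{2}+(1-|a|^{2})(1-|b|^{2})}$ is the quantity introduced in Section \ref{sec2}. The half-angle identity ${\rm th}^{2}(u/2)=({\rm ch}\,u-1)/({\rm ch}\,u+1)$ then gives
\[
{\rm th}^{2}\left(\frac{\rho_{\mathbb{B}^{n}}(a,b)}{4}\right)=\frac{A[a,b]-R}{A[a,b]+R},
\]
while the Hilbert side is already recorded in \eqref{explicit2}, which I rewrite as
\[
{\rm th}^{2}\left(\frac{h_{\mathbb{B}^{n}}(a,b)}{4}\right)=\frac{(1-a\cdot b)-R}{(1-a\cdot b)+R}.
\]
Both expressions have the shape $\varphi(X)=(X-R)/(X+R)$, and $\varphi$ is strictly increasing on $(R,\infty)$. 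Hence the left-hand inequality is equivalent to $A[a,b]\geq 1-a\cdot b$. Both quantities are positive, so I may square; using $A[a,b]^{2}=1+|a|^{2}|b|^{2}-2\,a\cdot b$ this reduces to $|a|^{2}|b|^{2}\geq(a\cdot b)^{2}$, which is precisely the Cauchy--Schwarz inequality. (Alternatively, the same left-hand inequality follows at once from $h_{\mathbb{B}^{n}}\leq\rho_{\mathbb{B}^{n}}$, which is Corollary \ref{hrho} applied in the two-plane through $0,a,b$, combined with the monotonicity of ${\rm th}(\cdot/4)$.)

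There is no genuine obstacle in this chain; the only point needing care is the equality statement. Equality in Cauchy--Schwarz forces $a$ and $b$ to be linearly dependent, while equality in \eqref{Est2h} forces $|a|=|b|$ through the arithmetic--geometric mean step used in its proof. The choice $a=-b$ satisfies both conditions at once, since then $(a\cdot b)^{2}=|a|^{4}=|a|^{2}|b|^{2}$ and $|a|=|b|$; therefore every inequality in \eqref{mamo2} becomes an equality there, as claimed.
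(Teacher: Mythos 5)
Your proposal is correct, but your main argument takes a genuinely different route from the paper's. The paper gives Corollary \ref{mamo1} no proof at all: it is meant to be read off by combining the preceding corollary \eqref{Est2h} (the right-hand inequality, with its equality case) with the comparison $h_{\mathbb{B}^n}\le\rho_{\mathbb{B}^n}$, i.e.\ Corollary \ref{hrho} (stated for $\mathbb{B}^2$ and extended to $\mathbb{B}^n$ by restriction to the plane through $0,a,b$) together with the monotonicity of ${\rm th}$ --- exactly the route you relegate to your parenthetical remark. Your primary argument instead expresses both sides in the common form $(X-R)/(X+R)$ with $R=\sqrt{(1-|a|^2)(1-|b|^2)}$: namely ${\rm th}^2\bigl(\rho_{\mathbb{B}^n}(a,b)/4\bigr)=(A[a,b]-R)/(A[a,b]+R)$ and, by \eqref{explicit2}, ${\rm th}^2\bigl(h_{\mathbb{B}^n}(a,b)/4\bigr)=\bigl((1-a\cdot b)-R\bigr)/\bigl((1-a\cdot b)+R\bigr)$, so that the increasing map $X\mapsto(X-R)/(X+R)$ reduces the left-hand inequality to $A[a,b]\ge 1-a\cdot b$, hence to $|a|^2|b|^2\ge(a\cdot b)^2$, which is Cauchy--Schwarz. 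This derivation is self-contained (it bypasses Theorem \ref{4.2}, which the paper quotes from \cite{rv}, and the monotone-ratio Proposition \ref{FujiProp}), it works natively in $\mathbb{B}^n$ with no planar reduction, and it makes the equality discussion transparent: equality on the left forces $a,b$ to be linearly dependent, equality on the right forces $|a|=|b|$, and $a=-b$ meets both conditions. What the paper's implicit route buys in exchange is brevity given the machinery already developed, plus the sharper two-sided comparison $\rho_{\mathbb{B}^2}\le h_{\mathbb{B}^2}/\sqrt{1-m^2}$ of Corollary \ref{hrho}, which your computation does not recover; your route buys elementarity and independence from the external identity of Theorem \ref{4.2}.
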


Note that Corollary \ref{mamo1} yields, in the case of the unit disk, a better bound than Lemma \ref{my210}. This raises the question whether 
Lemma \ref{my210} itself could be improved so that
 Corollary \ref{mamo1} would follow from it. In other words, is it true that for a bounded convex domain $D,$ the inequality $\rho_D\ge h_D \,$ holds?
This inequality, in turn, would follow from a positive solution of the open problem \ref{hilApo}.

The part (1) from the following lemma is well-known from projective geometry 
\cite[Section 4]{kob}, however we include a short proof and provide some
explicit formulas.

\begin{lem}\label{lem:cross-ratio}
\label{Masayo_Kobayashi}Let $u,c,d,v,w$ be points on the unit circle $%
\partial \mathbb{B}^{2}$, in this order. Let $a=\mathrm{LIS}\left[ u,v,c,w%
\right] $ and $b=\mathrm{LIS}\left[ u,v,d,w\right] $. Then

(1) The following equality of cross-ratios holds%
\[
\left\vert u,a,b,v\right\vert =\left\vert u,c,d,v\right\vert . 
\]

(2) The arc of the unit circle with endpoints $u$ and $v$ that contains $c,d$
is mapped onto the segment $\left[ u,v\right] $ by the function defined by $F(z)=\mathrm{LIS}[u,v,z,w]$, which is a restriction of a M\"{o}bius
transformation, namely 
\[
F(z)=\frac{\left( uv-uw-vw\right) z+uvw}{-wz+uv%
}. 
\]%
In particular, $F(u)=u$, $F(v)=v$ and, by the definitions of $a$ and $c$, 
\[
a=F(c)\text{ and }b=F(d)\text{.} 
\]

(3) If the lines $L\left[ u,v\right] $ and $L\left[ c,d\right] $ are
parallel, then $uv=cd$ and 
\[
a=\frac{(u+v-d)w-cd}{w-d}\,\,\text{ and }\quad b=\frac{(u+v-c)w-cd}{w-c}. 
\]
\end{lem}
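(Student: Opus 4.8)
The plan is to reduce everything to the single observation that $z \mapsto \mathrm{LIS}[u,v,z,w]$ is the restriction of a Möbius transformation, and to prove part (2) first, deducing part (1) from it afterwards. Since $u,v,w$ and the variable point $z$ all lie on $\partial\mathbb{B}^{2}$, I would apply the circle formula \eqref{LIScircle} directly with the substitution $(a,b,c,d)\mapsto(u,v,z,w)$ to get
\[
\mathrm{LIS}[u,v,z,w]=\frac{uv(z+w)-zw(u+v)}{uv-zw},
\]
and then regroup numerator and denominator as polynomials in $z$: the numerator becomes $(uv-uw-vw)z+uvw$ and the denominator $-wz+uv$. This is exactly the claimed form of $F$, and the identities $a=F(c)$, $b=F(d)$ are then just the definitions of $a$ and $b$.

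Next I would verify the remaining assertions of part (2). Setting $z=u$ and $z=v$ and cancelling the factors $u(v-w)$ and $v(u-w)$ gives $F(u)=u$ and $F(v)=v$. The determinant of the coefficient matrix of $F$ equals $uv(u-w)(v-w)$, which is nonzero because $u,v,w$ are distinct; hence $F$ is a genuine Möbius transformation and maps circles and lines to circles and lines. Every value $\mathrm{LIS}[u,v,z,w]$ lies on $L[u,v]$ by definition, so $F$ carries $\partial\mathbb{B}^{2}$ onto the line $L[u,v]$. To see that the arc from $u$ to $v$ through $c,d$ maps precisely onto the segment $[u,v]$, I would use the elementary fact that, because of the cyclic order $u,c,d,v,w$, this arc and the point $w$ lie on opposite sides of $L[u,v]$; thus for $z$ in the open arc the chord $[z,w]$ crosses $L[u,v]$ at an interior point of the disk, i.e. on the open segment $(u,v)$. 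Continuity of $F$ together with $F(u)=u$ and $F(v)=v$ then yields the claim.

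With part (2) established, part (1) is immediate: a Möbius transformation preserves the cross-ratio, so applying $F$ to the quadruple $(u,c,d,v)$, whose image is $(u,a,b,v)$, gives $\left|u,c,d,v\right|=\left|u,a,b,v\right|$ after taking absolute values. For part (3) I would first record the standard criterion that two chords of the unit circle are parallel exactly when the products of their endpoints coincide, so $L[u,v]\parallel L[c,d]$ forces $uv=cd$. (Note that $a,b$ remain well defined, since $cw\neq cd$ and $dw\neq cd$ guarantee that $L[u,v]$ is parallel to neither $L[c,w]$ nor $L[d,w]$.) Substituting $uv=cd$ into the expressions $a=\mathrm{LIS}[u,v,c,w]$ and $b=\mathrm{LIS}[u,v,d,w]$ coming from \eqref{LIScircle}, then cancelling the common factor $c$ (respectively $d$) and a sign, produces the two displayed formulas.

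The algebraic simplification of \eqref{LIScircle} is routine and I expect no difficulty there. The one step requiring genuine care is the conclusion of part (2): pinning down that the image of the designated arc is the bounded segment $[u,v]$ rather than one of the two complementary rays of the line $L[u,v]$. The side-of-the-line argument above is the cleanest way I see to settle this, and it is essentially the only place where the prescribed cyclic order $u,c,d,v,w$ of the five points is actually used.
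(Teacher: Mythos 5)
Your proposal is correct, and its core is in fact a route the paper itself takes inside part (2): the same reduction of $\mathrm{LIS}[u,v,z,w]$ via \eqref{LIScircle} to the M\"obius map $F(z)=\frac{(uv-uw-vw)z+uvw}{-wz+uv}$, followed by the observation that $F$, being a restriction of a M\"obius transformation, preserves cross-ratios --- which the paper explicitly records as ``another proof'' of (1). The difference is one of organization and of what gets proved independently. The paper's \emph{primary} proof of (1) is a synthetic computation: with $\alpha=\measuredangle(u,w,c)$, $\beta=\measuredangle(c,w,d)$, $\gamma=\measuredangle(d,w,v)$, the law of sines in the triangles with vertex $w$ gives $\left\vert u,a,b,v\right\vert=\frac{\sin(\alpha+\beta)\sin(\beta+\gamma)}{\sin\alpha\sin\gamma}=\left\vert u,c,d,v\right\vert$, the second equality using that the triangles $\Delta(w,u,c)$, $\Delta(w,u,d)$, $\Delta(w,c,v)$, $\Delta(w,d,v)$ are inscribed in the unit circle. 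You skip this and let the M\"obius argument carry all of (1)--(2), which is legitimate and shorter. In return you supply two details the paper leaves implicit: the nonvanishing of the determinant $uv(u-w)(v-w)$, so that $F$ is genuinely a M\"obius transformation, and the argument --- via the cyclic order placing $w$ and the arc through $c,d$ on opposite sides of $L[u,v]$ --- that this arc is mapped onto the bounded segment $[u,v]$ rather than onto a ray of the line; the paper asserts this ``onto the segment'' clause without proof. For (3), your appeal to the standard parallel-chords criterion ($uv=cd$ exactly when the chords are parallel) replaces the paper's explicit parameterization $u=ce^{-i\theta}$, $v=de^{i\theta}$, and the remaining substitution and cancellation are identical to the paper's.
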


\begin{proof}
(1) Denote $\alpha =\measuredangle (u,w,c)$, $\beta =\measuredangle \left(
c,w,d\right) $ and $\gamma =\measuredangle (d,w,v)$.\newline
We prove that 
\begin{equation}
\left\vert u,a,b,v\right\vert =\frac{\sin (\alpha +\beta )\sin (\beta
+\gamma )}{\sin \alpha \sin \gamma }=\left\vert u,c,d,v\right\vert .
\label{equal_CRs}
\end{equation}%
Using the law of sines in $\Delta( w,a,u)$ and $\Delta( w,b,u)$ and, similarly, in 
$\Delta( w,a,v)$ and $\Delta( w,b,v),$
we get
\[
\frac{\left\vert u-b\right\vert }{\left\vert u-a\right\vert }=\frac{
\left\vert w-b\right\vert }{\left\vert w-a\right\vert }\frac{\sin \left(
\alpha +\beta \right) }{\sin \alpha }\,, \quad \frac{\left\vert v-a\right\vert }{%
\left\vert v-b\right\vert }=\frac{\left\vert w-a\right\vert }{\left\vert
w-b\right\vert }\frac{\sin \left( \beta +\gamma \right) }{\sin \gamma }\,.\]
Then 
\[
\left\vert u,a,b,v\right\vert =\frac{\left\vert u-b\right\vert }{\left\vert
u-a\right\vert }\frac{\left\vert v-a\right\vert }{\left\vert v-b\right\vert }%
=\frac{\sin (\alpha +\beta )\sin (\beta +\gamma )}{\sin \alpha \sin \gamma } 
\]%
and the first equality in (\ref{equal_CRs}) follows. 
On the other hand, using the law of sines in the triangles 
$\Delta( w,u,c)$, $
\Delta (w,u,d),$ $\Delta( w,c,v)$, and $\Delta( w,d,v)$, 
inscribed in the unit circle, we
get \[\frac{\left\vert u-c\right\vert }{\sin \alpha }=\frac{\left\vert
u-d\right\vert }{\sin \left( \alpha +\beta \right) }=\frac{\left\vert
v-c\right\vert }{\sin \left( \beta +\gamma \right) }=\frac{\left\vert
v-d\right\vert }{\sin \gamma }=2\,,\] 
hence 
\[
\left\vert u,c,d,v\right\vert =\frac{\left\vert u-d\right\vert }{\left\vert
u-c\right\vert }\frac{\left\vert v-c\right\vert }{\left\vert v-d\right\vert }%
=\frac{\sin (\alpha +\beta )\sin (\beta +\gamma )}{\sin \alpha \sin \gamma }%
\text{,} 
\]%
therefore the first equality in (\ref{equal_CRs}) is obtained. 
\begin{figure}[H] 
  \centerline{\includegraphics[width=0.45\linewidth]{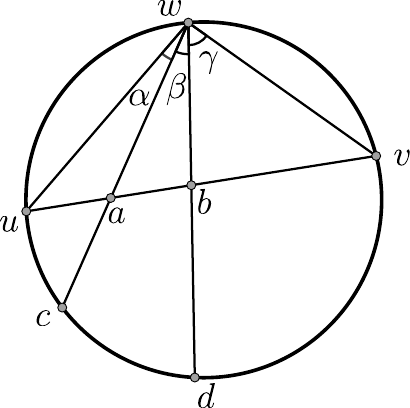}}
  \caption{Lemma \ref{lem:cross-ratio} (1).}
  \label{lem:(1)}
\end{figure}

(2) Let $F(z)=\mathrm{LIS}[u,v,z,w]$, where $z$ moves on the arc of the unit
circle with endpoints $u$ and $v$ that contains $c,d$. Using (\ref{LIScircle}%
), we get 
\[
F(z)=\frac{uv(z+w)-zw(u+v)}{uv-zw}=\frac{\left( uv-uw-vw\right) z+uvw}{-wz+uv%
}. 
\]%
Note that $F(u)= u, F(v)= v, F(c)=a, F(d)=b $ and $F$ preserves the complex cross-ratio, since $F$ is a restriction of a M\"{o}bius transformation, therefore 
\begin{eqnarray*}
\left\vert u,a,b,v\right\vert 
  &=&\frac{\left\vert u-b\right\vert }{\left\vert u-a\right\vert }
     \frac{\left\vert v-a\right\vert }{\left\vert v-b\right\vert }
   =  \frac{|F\left( u\right) -F\left( d\right)|}
           {|F\left( u\right) -F\left( c\right)|}
      \frac{|F\left( v\right) -F(c)|}{|F(v)-F\left(d \right) |} \\
  &=& \frac{\vert u-d\vert}{\vert u-c\vert}
      \frac{\vert v-c\vert}{\vert v-d\vert}
   = \left\vert u,c,d,v\right\vert .
\end{eqnarray*}%
This provides another proof for the equality of cross-ratios from (1). 
\newline
(3) If $L\left[ u,v\right] $ and $L\left[ c,d\right] $ are parallel, then
there exists $\theta \in \mathbb{R}$ such that $u=ce^{-i\theta }$ and $%
v=de^{i\theta }$. Then $uv=cd$ and 
\[
a=F(c)=\frac{uv(c+w)-cw(u+v)}{uv-cw}=\frac{cd\left( c+w\right) -cw\left(
u+v\right) }{cd-cw}=\frac{w\left( u+v-d\right) -cd}{w-d}. 
\]%
Similarly, $b=F(d)=\frac{w\left( u+v-c\right) -cd}{w-c}$.
\end{proof}

\begin{lem} \label{newLem}
 For distinct points $a,b\in \mathbb{B}^{2}$, let $u,v\in
L[a,b]\cap \partial \mathbb{B}^{2}$, where $u,a,b,v$ are aligned in this
order. Then, the following two statements hold.

  \begin{enumerate}
    \item There exist two circles passing through points $a$ and $b$, tangent to
the unit circle, and the points of tangency lie in distinct half-planes
determined by the line $L[a,b]$.
    
    \item\label{lem:2}
          Let $w\in \partial \mathbb{B}^{2}\setminus \left\{ u,v\right\} $ and $%
c\in L[w,a]\cap \partial \mathbb{B}^{2}\setminus \left\{ w\right\} $, $%
d\in L[w,b]\cap \partial \mathbb{B}^{2}\setminus \left\{ w\right\} $.
The circle passing through $a$, $b$ and $w$ is tangent to the unit circle if
and only if the lines $L[a,b]$ and $L[c,d]$ are parallel.
  \end{enumerate}
\end{lem}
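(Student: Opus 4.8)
The two assertions are of rather different character, so I would treat them separately: the existence-and-separation claim (1) by an explicit analysis of the circles through $a$ and $b$ tangent to $\partial\mathbb{B}^2$, and the tangency criterion (2) by an inversion centred at $w$.

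For part (1) I would first record the internal-tangency condition: a circle with centre $P$ and radius $R$ lying inside $\mathbb{B}^2$ is tangent to $\partial\mathbb{B}^2$ exactly when $|P|+R=1$. Writing the tangent point as $w\in\partial\mathbb{B}^2$, collinearity of the two centres with $w$ forces $P=\lambda w$ and $R=1-\lambda$, while requiring $a$ (resp.\ $b$) to lie on the circle gives $\lambda=\tfrac{1-|a|^2}{2(1-w\cdot a)}=\tfrac{1-|b|^2}{2(1-w\cdot b)}$. Clearing denominators collapses this to the single linear equation $w\cdot V=k$ with $V=(1-|a|^2)b-(1-|b|^2)a$ and $k=|b|^2-|a|^2$, so the tangent points are exactly the intersections of this line $\ell$ with $\partial\mathbb{B}^2$; I would verify the converse (that each such $w$ does produce a genuine tangent circle with $\lambda\in(0,1)$) by reversing the computation.

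Existence of two tangent circles then amounts to $\dist(0,\ell)=|k|/|V|<1$, and a short expansion reduces this to $|a-b|^2>0$, which holds since $a\neq b$. For the separation statement I would show that the chord $[w_1,w_2]$ carried by $\ell$ crosses the chord $[u,v]=L[a,b]\cap\partial\mathbb{B}^2$ inside the disk, using that two chords meet internally iff their endpoints interlace on the circle, which is precisely the assertion that $w_1,w_2$ lie in distinct half-planes bounded by $L[a,b]$. Rotating so that $L[a,b]=\{y=m\}$ and writing $a=(a_1,m),\,b=(b_1,m)$, the crossing $Q=\ell\cap L[a,b]$ satisfies $|Q|^2-1$ having the same sign as $M(a_1+b_1)^2-(M+a_1b_1)^2$ with $M=1-m^2$, and the identity $(M+a_1b_1)^2-M(a_1+b_1)^2=(1-|a|^2)(1-|b|^2)>0$ gives $Q\in\mathbb{B}^2$. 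This single positivity identity is, to my mind, the one genuine computation and the main obstacle; everything around it is bookkeeping.

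For part (2) the transparent route is the inversion $\iota$ centred at the common boundary point $w$. Both $\partial\mathbb{B}^2$ and the circle $C[a,b,w]$ pass through $w$, so $\iota$ carries them to lines $L_1\ni\iota(c),\iota(d)$ and $L_2\ni\iota(a),\iota(b)$; since $w$ is the candidate point of tangency, $C[a,b,w]$ is tangent to $\partial\mathbb{B}^2$ iff the image lines can meet only at $\iota(w)=\infty$, i.e.\ iff $L_1\parallel L_2$. As $a,c$ lie on one ray from $w$ and $b,d$ on another, the intercept theorem gives $L_1\parallel L_2\iff \tfrac{|w\iota(a)|}{|w\iota(c)|}=\tfrac{|w\iota(b)|}{|w\iota(d)|}$, and substituting $|w\iota(x)|=r^2/|wx|$ flips this to $\tfrac{|wa|}{|wc|}=\tfrac{|wb|}{|wd|}$, which is exactly the intercept-theorem condition for $L[a,b]\parallel L[c,d]$. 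Chaining these equivalences yields the stated "if and only if". I expect part (2) to be the easier half once the inversion is in place; alternatively it can be read off from the cross-ratio identity of Lemma \ref{lem:cross-ratio}, but the inversion makes the geometry immediate.
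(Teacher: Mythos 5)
Your proposal is correct, and on both parts it takes a genuinely different route from the paper's proof. For part (1), the paper works in complex coordinates: it writes $C(a,b,w)$ via the cross-ratio equation \eqref{abw}, imposes a double intersection with $\partial\mathbb{B}^2$ to obtain the quadratic \eqref{abw3}, converts it (using $w\overline{w}=1$) into the line \eqref{abwwb}, and solves explicitly for the two tangency points \eqref{tangency}; you reach the same chord-of-contact line $w\cdot V=k$ by the elementary parametrization $P=\lambda w$, $R=1-\lambda$ of internally tangent circles, and your discriminant identity $|V|^2-k^2=|a-b|^2(1-|a|^2)(1-|b|^2)$ is exactly the paper's identity \eqref{SIDEN2} in real form, so the two computations run parallel. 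Where your route buys something is the separation claim: the paper infers that $w_1,w_2$ lie in distinct half-planes from the facts that the line \eqref{abwwb} passes through an interior point and is not parallel to $L[a,b]$, which by itself is not conclusive (two non-parallel chords need not cross inside the disk), whereas your explicit check that $Q=\ell\cap L[a,b]$ lies in $\mathbb{B}^2$, via the identity $(M+a_1b_1)^2-M(a_1+b_1)^2=(1-|a|^2)(1-|b|^2)$ (which I verified, along with the preceding sign claim), supplies precisely the missing interior-crossing step and makes this part airtight. For part (2), the paper argues with tangent-chord angles at $w$ (tangency iff $\measuredangle(w,d,c)=\measuredangle(w,b,a)$), while your inversion at $w$ plus two applications of the intercept theorem gives the same equivalence with no angle or configuration bookkeeping; both are sound, and your version also makes transparent why tangency, if it occurs, must occur at $w$. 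Two small points you should state explicitly in (1): a circle through the interior points $a,b$ that is tangent to $\partial\mathbb{B}^2$ is necessarily internally tangent and contained in the closed disk (otherwise $a,b$ could not lie on it), so your parametrization loses no generality; and the two tangency points yield two distinct circles, since a tangent circle determines its point of tangency. With those remarks added, your sketch is a complete proof.
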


\begin{proof}
(1) Let $w\in \partial \mathbb{B}^{2}\setminus \left\{ u,v\right\} $ and $%
C\left( a,b,w\right) $ be the circle passing through $a$, $b$ and $w$. Applying the
cross-ratio we see that the equation of $C\left( a,b,w\right) $ is 
\begin{equation} \label{abw}
\left( a-w\right) \left( \overline{b}-\overline{w}\right) \left( b-z\right)
\left( \overline{a}-\overline{z}\right) -\left( b-w\right) \left( \overline{a%
}-\overline{w}\right) \left( a-z\right) \left( \overline{b}-\overline{z}%
\right) =0. 
\end{equation}%
The unit circle and the circle $C\left( a,b,w\right) $ have the intersection
points $w$ and $z_{0}$. Since $w$, $z_{0}\in \partial \mathbb{B}^{2}$,
substituting in (\ref{abw}) $\overline{w}=1/w$ and $\overline{z}=\overline{%
z_{0}}=1/z_{0}$, we obtain 
\begin{eqnarray}
&&\left( a-w\right) \left( \overline{b}w-1\right) \left( b-z_{0}\right)
\left( \overline{a}z_{0}-1\right) -\left( b-w\right) \left( \overline{a}%
w-1\right) \left( a-z_{0}\right) \left( \overline{b}z_{0}-1\right)  
\nonumber \\
&=&\left( w-z_{0}\right) \left( kz_{0}w-\left( \left\vert a\right\vert
^{2}-\left\vert b\right\vert ^{2}\right) \left( z_{0}+w\right) +\overline{k}%
\right) =0\text{,}  \label{abw2}
\end{eqnarray}%
where $k:=\left( \left( a-b\right) \overline{a}\overline{b}+\overline{a}-%
\overline{b}\right) $. Note that $k=\left( 1-\left\vert b\right\vert
^{2}\right) \overline{a}-\left( 1-\left\vert a\right\vert ^{2}\right) 
\overline{b}$, and $k=0$ if and only if $a=b$. 
The unit circle and $C\left( a,b,w\right) $ are tangent if and only if the
equation (\ref{abw2}) has a double root $w=z_{0}$, i.e. $w$ is a root of the
quadratic equation 
\begin{equation} \label{abw3}
\left( \left( a-b\right) \overline{a}\overline{b}+\overline{a}-\overline{b}%
\right) w^{2}-2\left( \left\vert a\right\vert ^{2}-\left\vert b\right\vert
^{2}\right) w+\left( \left( \overline{a}-\overline{b}\right) ab+a-b\right)
=0. 
\end{equation}%

As $w\overline{w}=1$, (\ref{abw3}) holds if and only if $z=w$ satisfies 
\begin{equation}\label{abwwb}
\left( \left( a-b\right) \overline{a}\overline{b}+\overline{a}-\overline{b}%
\right) z+\left( \left( \overline{a}-\overline{b}\right) ab+a-b\right) 
\overline{z}-2\left( \left\vert a\right\vert ^{2}-\left\vert b\right\vert
^{2}\right) =0.  
\end{equation}%
Equation (\ref{abwwb}) represents a line passing through the point $%
(\left\vert a\right\vert ^{2}-\left\vert b\right\vert ^{2})/k$ and this
point belongs to the unit disk, as will follows from the identity (\ref%
{SIDEN2}) below. Therefore, the line (\ref{abwwb}) and the unit circle have
exactly two intersection points $w_{1}\neq w_{2}$. The line (\ref{abwwb})
intersects $L\left[ a,b\right] $ at a single point, therefore $w_{1}$ and $%
w_{2}$ lie in distinct half-planes determined by $L\left[ a,b\right] $.
Indeed, the lines (\ref{abwwb}) and 
\[ L[a,b]: \left( -i \right) \left( 
\overline{a}-\overline{b}\right) z+i\left( a-b\right) z+2 {\rm Im}(a%
\overline{b})=0 \] 
would be parallel or coincident if and only if ${\rm Re}%
\left( k\left( a-b\right) \right) =0$, but 
\begin{eqnarray*}
{\rm Re}\left( k\left( a-b\right) \right)  &=&\left( \left\vert
a\right\vert ^{2}+\left\vert b\right\vert ^{2}\right) \left( 1+{\rm Re}(a%
\overline{b})\right) -2\left( 1+\left\vert a\right\vert ^{2}\left\vert
b\right\vert ^{2}\right)  \\
&<&-2\left( 1-\left\vert a\right\vert ^{2}\right) \left( 1-\left\vert
b\right\vert ^{2}\right) <0.
\end{eqnarray*}%
Solving (\ref{abw3}) we obtain 
\begin{equation}\label{tangency}
\left\{ w_{1},w_{2}\right\} =\left\{ \frac{\left\vert a\right\vert
^{2}-\left\vert b\right\vert ^{2}\pm i\left\vert a-b\right\vert \sqrt{\left(
1-\left\vert a\right\vert ^{2}\right) \left( 1-\left\vert b\right\vert
^{2}\right) }}{\left( a-b\right) \overline{a}\overline{b}+\overline{a}-%
\overline{b}}\right\} .  
\end{equation}%
It is easy to check that $w_{1},w_{2}\in \partial \mathbb{B}^{2}$, observing
that 
\begin{equation}\label{SIDEN2}
\left\vert \left( a-b\right) \overline{a}\overline{b}+\overline{a}-\overline{%
b}\right\vert ^{2}=\left( \left\vert a\right\vert ^{2}-\left\vert
b\right\vert ^{2}\right) ^{2}+\left\vert a-b\right\vert ^{2}\left(
1-\left\vert a\right\vert ^{2}\right) \left( 1-\left\vert b\right\vert
^{2}\right) \text{,}  
\end{equation}
as follows from (\ref{SIDEN}) for $n=2$ by substituting $a$ and $b$ by $%
\overline{a}$ and $\left( -\overline{b}\right) $, respectively. 

In conclusion, the circle $C\left( a,b,w\right) $ is tangent to the unit
circle if and only if $w\in \left\{ w_{1},w_{2}\right\} $.

(2) The circles $\partial \mathbb{B}^{2}$ and $C\left( a,b,w\right) $ are
tangent at $w\in \partial \mathbb{B}^{2}$ if and only if $T_{1}=T_{2}$,
where $T_{1}$ and $T_{2}$ are the tangent lines at $w\in \partial \mathbb{B}%
^{2}$ to $\partial \mathbb{B}^{2}$ and $C\left( a,b,w\right) $,
respectively. The angles formed by $T_{1}$ and $T_{2}$ with the line $%
L[c,w]=L[a,w]$, in the half plane determined by this line and containing $u$, are equal to $\measuredangle (w,d,c)$ and $\measuredangle (w,b,a)$,
respectively. Then the tangency of the circles $\partial \mathbb{B}^{2}$ and 
$C\left( a,b,w\right) $ at $w\in \partial \mathbb{B}^{2}$ is equivalent to $%
\measuredangle (w,d,c)=\measuredangle (w,b,a)$, i.e. to the parallelism of
the lines $L[a,b]=L[u,v]$ and $L[c,d].$

\end{proof}

\begin{figure}[htbp]
  \centerline{\includegraphics[width=0.45\linewidth]{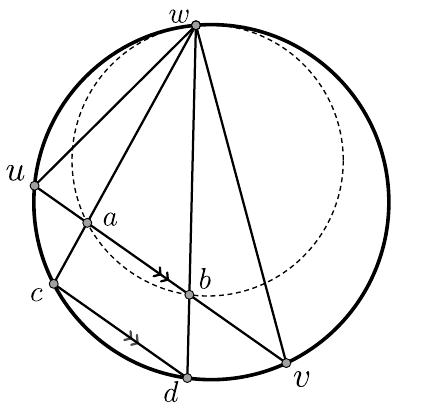}}
  \caption{Lemma \ref{newLem}(2): $ L[a,b] $ and $ L[c,d] $ 
                     are parallel.}
  \label{fig:2-2}
\end{figure}

\begin{rem}
  From Lemma \ref{newLem}, we can see that, if $w\in \partial \mathbb{B}^{2}\setminus \left\{ u,v\right\} $ is chosen such that the
circle passing through $a$, $b$ and $w$ is tangent to the unit circle, then the lengths of arcs 
  $ \stackrel{\large\mbox{$\frown$}}{uc} $ and 
  $ \stackrel{\large\mbox{$\frown$}}{dv} $ are equal and $
L[a,b]$ is mapped to $L[c,d]$ through a similarity transformation
with the form $f(z)=w+A\left( z-w\right) $, where $A\in \mathbb{R}$.
\end{rem}

\begin{thm}\label{Hilbertmidpoint} 
\label{lemBisect} (1) Let $u,c,d,v,w$ be points on the unit circle $\partial 
\mathbb{B}^{2}$, in this order. Let $a=\mathrm{LIS}\left[ u,v,c,w\right] $
and $b=\mathrm{LIS}\left[ u,v,d,w\right] $. Then 
\begin{equation}
h_{\mathbb{B}^{2}}(a,b)=\log |u,c,d,b|.  \label{Hcircle}
\end{equation}

(2) Assume that the lines $L[u,v]$ and $L[c,d]$ are parallel. Let $%
m=(c+d)/|c+d|$ and $p=\mathrm{LIS}[u,v,w,m]$. Then the circle $C(w,a,b)$ is
tangent to $\partial \mathbb{B}^{2}$ at the point $w$ and $\measuredangle
(c,w,m)=\measuredangle (d,w,m)$. Moreover, $p$ is the Hilbert midpoint of $a$
and $b$, hence $p$ is on the hyperbolic perpendicular bisector of the
hyperbolic geodesic segment determined by $a$ and $b$,%
\[
h_{\mathbb{B}^{2}}(a,p)=h_{\mathbb{B}^{2}}(p,b)\,\quad \mathrm{and}\quad
\rho _{\mathbb{B}^{2}}(a,p)=\rho _{\mathbb{B}^{2}}(p,b). 
\]
\end{thm}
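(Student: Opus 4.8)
The plan for part (1) is to reduce it directly to Lemma~\ref{lem:cross-ratio}. Since $a=\mathrm{LIS}[u,v,c,w]$ and $b=\mathrm{LIS}[u,v,d,w]$ both lie on the chord $[u,v]$, the boundary points $u,v\in\partial\mathbb{B}^2$ are exactly the endpoints of the Hilbert geodesic through $a$ and $b$. The map $F$ of Lemma~\ref{lem:cross-ratio}(2) is a monotone homeomorphism of the arc onto the chord fixing $u$ and $v$, so it carries the ordered points $u,c,d,v$ to $u,a,b,v$ in the same order; thus $h_{\mathbb{B}^2}(a,b)=\log|u,a,b,v|$ by \eqref{hildef}, and Lemma~\ref{lem:cross-ratio}(1) rewrites $|u,a,b,v|=|u,c,d,v|$, which is the asserted formula \eqref{Hcircle}.

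For part (2) the whole argument rests on one structural remark that I would establish first. Parallel chords cut off equal arcs, so $\widehat{uc}=\widehat{vd}$; combined with $\widehat{cm}=\widehat{md}$ (which holds because $m=(c+d)/|c+d|$ bisects the arc $cd$) this gives $\widehat{um}=\widehat{mv}$, i.e. $m$ bisects the arc $uv$ as well. Consequently the Euclidean reflection $\sigma$ in the diameter through $m$ fixes $m$ and interchanges $c\leftrightarrow d$ and $u\leftrightarrow v$; as a reflection preserving $\partial\mathbb{B}^2$ it is an isometry of both the Euclidean and the hyperbolic metric of $\mathbb{B}^2$. With this in hand, (2a) is immediate from Lemma~\ref{newLem}(\ref{lem:2}) (apply it with $c\in L[w,a]$, $d\in L[w,b]$; the circle $C(w,a,b)$ is tangent to $\partial\mathbb{B}^2$ at $w$ precisely because $L[a,b]=L[u,v]$ is parallel to $L[c,d]$), and (2b) is the inscribed-angle theorem, the equal arcs $\widehat{cm}$ and $\widehat{md}$ subtending equal angles at $w$.

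For part (2c), write $F(z)=\mathrm{LIS}[u,v,z,w]$ as in Lemma~\ref{lem:cross-ratio}(2); then $p=F(m)$, $a=F(c)$, $b=F(d)$, and $u,a,p,b,v$ occur in this order on the chord. Hence $h_{\mathbb{B}^2}(a,p)=\log|u,a,p,v|=\log|u,c,m,v|$ and $h_{\mathbb{B}^2}(p,b)=\log|u,p,b,v|=\log|u,m,d,v|$ by the cross-ratio invariance of the M\"obius map $F$. Applying the isometry $\sigma$ to $|u,c,m,v|$ gives $|v,d,m,u|$, and the reversal symmetry $|x_1,x_2,x_3,x_4|=|x_4,x_3,x_2,x_1|$ of the cross-ratio turns this into $|u,m,d,v|$; the two Hilbert distances therefore agree and $p$ is the Hilbert midpoint.

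The subtle part is (2d), where one must see that this same $p$ is also hyperbolically equidistant from $a$ and $b$ — something that fails for a generic chord and holds here only because of the parallelism. Using \eqref{rhoB} in the form $\sh^2(\rho_{\mathbb{B}^2}(x,y)/2)=|x-y|^2/\big((1-|x|^2)(1-|y|^2)\big)$ and the power-of-a-point identity $1-|x|^2=|x-u|\,|x-v|$ for $x$ on $[u,v]$, the claim $\rho_{\mathbb{B}^2}(a,p)=\rho_{\mathbb{B}^2}(p,b)$ (the common factor $1-|p|^2$ cancelling) becomes
\[
\frac{|a-p|^2}{|a-u|\,|a-v|}=\frac{|b-p|^2}{|b-u|\,|b-v|}.
\]
I would then transport this through $F$: writing $F(z)=(\alpha z+\beta)/(\gamma z+\delta)$ and using $F(z)-F(z')=(\alpha\delta-\beta\gamma)(z-z')/\big((\gamma z+\delta)(\gamma z'+\delta)\big)$, the factors attached to $u,v,m$ are common to both sides and cancel, so the displayed identity is equivalent to the purely boundary identity $|c-m|^2/(|c-u||c-v|)=|d-m|^2/(|d-u||d-v|)$, which is immediate from $\sigma$ (it fixes $m$, swaps $c\leftrightarrow d$ and $u\leftrightarrow v$, and preserves Euclidean distances). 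This is the step I expect to be the main obstacle; a more geometric alternative is to note that by (2a) the points $a,b$ lie on a horocycle based at $w$, so after sending $w$ to $\infty$ in the upper half-plane their perpendicular bisector is the vertical geodesic ending at $w$, on which $p$ then lies.
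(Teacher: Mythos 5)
Your proof is correct. For part (1) and for the tangency and angle-bisection claims of part (2) you argue exactly as the paper does: the definition of $h_{\mathbb{B}^2}$ plus Lemma \ref{lem:cross-ratio} for (1) (both you and the paper in fact prove $h_{\mathbb{B}^2}(a,b)=\log|u,c,d,v|$, which is what \eqref{Hcircle} intends), and Lemma \ref{newLem}(2) together with the fact that $m$ is the midpoint of the arc $cd$ not containing $u,v,w$ for the tangency and for $\measuredangle(c,w,m)=\measuredangle(d,w,m)$. For the Hilbert-midpoint claim your route is the paper's argument in different clothing: the paper either redoes its law-of-sines computation with $\alpha=\gamma$ or invokes the distance equalities $|v-c|=|u-d|$, $|u-c|=|v-d|$, $|u-m|=|v-m|$, and your reflection $\sigma$ in the diameter through $m$, combined with Möbius invariance under $F$ and the reversal symmetry of the cross-ratio, encodes precisely those equalities. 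The genuine divergence is the final claim $\rho_{\mathbb{B}^2}(a,p)=\rho_{\mathbb{B}^2}(p,b)$: the paper disposes of it by citing the known implication that a Hilbert midpoint on a chord is also a hyperbolic midpoint (\cite[Theorem 6.21]{afv} or \cite[Theorem 1.2]{rv}), whereas you prove it from scratch via the power-of-a-point identity $1-|x|^2=|x-u|\,|x-v|$ and transport through $F$, reducing everything to the boundary identity $|c-m|^2/(|c-u|\,|c-v|)=|d-m|^2/(|d-u|\,|d-v|)$, which $\sigma$ gives at once. I checked the cancellation of the $F$-factors and it is correct, so this buys a fully self-contained proof; note, though, that the same step follows in one line from Theorem \ref{4.2}, already quoted in the paper: since $a,p,b$ are collinear, the same distance-to-origin factor $\sqrt{1-m^2}$ appears for both pairs in $\sh(h_{\mathbb{B}^2}/2)=\sqrt{1-m^2}\,\sh(\rho_{\mathbb{B}^2}/2)$ (here $m$ denotes that distance, not your arc midpoint), so $h_{\mathbb{B}^2}(a,p)=h_{\mathbb{B}^2}(p,b)$ forces $\rho_{\mathbb{B}^2}(a,p)=\rho_{\mathbb{B}^2}(p,b)$ --- this is essentially what the cited results say. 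One caveat: your closing horocycle ``alternative'' is only a sketch, since after sending $w$ to $\infty$ you would still have to show that the image of $p$ lies on the vertical bisector of the images of $a$ and $b$; but as it is offered merely as an aside, it does not affect the validity of your main argument.
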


\begin{figure}[H]
\centering
\begin{subfigure}[b]{0.45\textwidth}
\centering
\includegraphics[width=\textwidth]{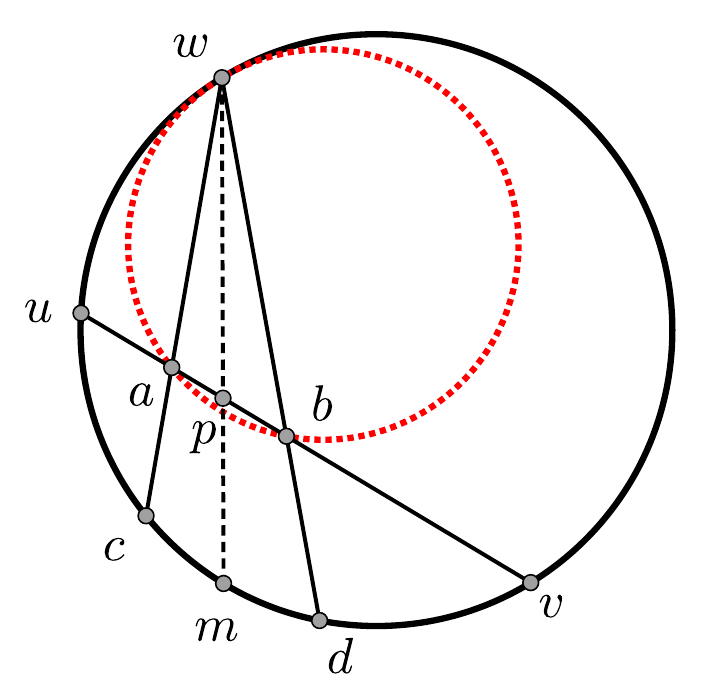}
\caption{}
\end{subfigure}
\hfill
\begin{subfigure}[b]{0.48\textwidth}
\centering
\includegraphics[width=\textwidth]{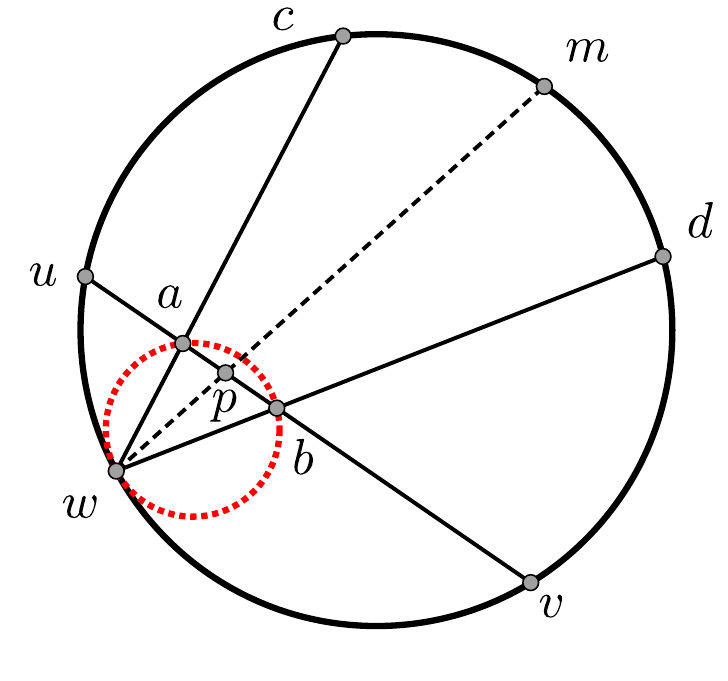}
\caption{}
\label{fig:2}
\end{subfigure}
\label{fig:3}
\caption{The point $w$ in Theorem  \ref{lemBisect} (2) is chosen
so that the circle $C(a, b,w)$ through the points $a, b,w,$
is tangent to the unit circle, see Remark \ref{vamrmk}. In both cases (A) and (B) 
$|c-u|=|d-v|$ and $|c-m|=|d-m|$ where $m=(c+d)/|c+d|\,.$}
\label{fig:UVW}
\end{figure}

\begin{proof}
(1) By definition, $h_{\mathbb{B}^{2}}(a,b)=\log \left\vert
u,a,b,v\right\vert $. By Lemma \ref{lem:cross-ratio}, $\left\vert
u,a,b,v\right\vert =\left\vert u,c,d,v\right\vert $ and (\ref{Hcircle})
follows.

(2) The point $m$ is the Euclidean midpoint of the arc determined on the
unit circle by $c$ and $d$, in the half plane not containing $u,v$, $w$.
Then $\measuredangle (c,w,m)=\measuredangle (d,w,m)$ and $p$ is on the
segment $\left[ a,b\right] $. As $L[u,v]$ and $L[c,d]$ are parallel, it
follows that $\alpha =\gamma $ and $\left\vert m-c\right\vert =\left\vert
m-d\right\vert $, $\left\vert u-c\right\vert =\left\vert v-d\right\vert $,
and Lemma \ref{newLem} (2) \ shows that $C\left( w,a,b\right) $ is tangent
to the unit circle at $w$.

Using equalities analogous to (\ref{equal_CRs}), from the proof of Lemma \ref
{lem:cross-ratio}, we obtain 
\[
h_{\mathbb{B}^{2}}(a,p)=\log |u,a,p,v|=\log |u,c,m,v|=\log \frac{\sin \left(
\alpha +\frac{\beta }{2}\right) \sin \left( \beta +\gamma \right) }{\sin
\alpha \sin \left( \gamma +\frac{\beta }{2}\right) }
\]%
and 
\[
h_{\mathbb{B}^{2}}(p,b)=\log |u,p,b,v|=\log |u,m,d,v|=\log \frac{\sin \left(
\alpha +\beta \right) \sin \left( \gamma +\frac{\beta }{2}\right) }{\sin
\left( \alpha +\frac{\beta }{2}\right) \sin \gamma }.
\]%
Since $\alpha =\gamma $, it follows that $h_{\mathbb{B}^{2}}(a,p)=h_{\mathbb{%
B}^{2}}(p,b)$. Alternatively, we may use in the proof the formulas 
\[
|u,c,m,v|=\frac{\left\vert u-m\right\vert }{\left\vert u-c\right\vert }\frac{
\left\vert v-c\right\vert }{\left\vert v-m\right\vert }, \quad |u,m,d,v|=\frac{
\left\vert u-d\right\vert }{\left\vert u-m\right\vert }\frac{\left\vert
v-m\right\vert }{\left\vert v-d\right\vert }\] 
and the equalities $\left\vert
v-c\right\vert =\left\vert u-d\right\vert $, $\left\vert u-c\right\vert
=\left\vert v-d\right\vert $, and $\left\vert u-m\right\vert =\left\vert
v-m\right\vert $.

It is known that $h_{\mathbb{B}^{2}}(a,p)=h_{\mathbb{B}^{2}}(p,b)$, where $p$
is on the segment $\left[ a,b\right] $, implies $\rho _{\mathbb{B}%
^{2}}(a,p)=\rho _{\mathbb{B}^{2}}(p,b)$, see \cite[Theorem 6.21]{afv} or 
\cite[Theorem 1.2]{rv}. 
\end{proof}

\begin{nonsec}{\bf Remark.}\label{vamrmk} 
In \cite[Theorem 1.1]{fkv} the visual angle distance in the unit disk
between two points $a,b\in \partial \mathbb{B}^{2}$, with $\left\vert
a\right\vert \neq \left\vert b\right\vert $ and noncollinear with the
origin, is determined as $$v_{\mathbb{B}^{2}}\left( a,b\right) =\max \left\{
\measuredangle (a,w_{1},b),\measuredangle (a,w_{2},b)\right\} \,$$ where $%
w_{1},w_{2}\in \partial \mathbb{B}^{2}$ lie on the circle $S\left( c,\sqrt{%
\left\vert c\right\vert ^{2}-1}\right) $. \cite[Theorem 3.2]{fkv} shows that
the circle $C\left( a,b,w\right) $ with $w\in \left\{ w_{1},w_{2}\right\} $
is tangent to the unit circle at $w$. In \cite[Theorem 1.1]{fkv} the
explicit formulas $$\left\{ w_{1},w_{2}\right\} =\left\{ \left( 1\pm i\sqrt{%
\left\vert c\right\vert ^{2}-1}\right) /\overline{c}\right\} $$ are obtained,
where $$c=\mathrm{LIS}[a,b,a^{\ast },b^{\ast }]=\frac{\left( \overline{a}-%
\overline{b}\right) ab+a-b}{\left\vert a\right\vert ^{2}-\left\vert
b\right\vert ^{2}}=\frac{a\left( 1-\left\vert b\right\vert ^{2}\right)
-b\left( 1-\left\vert a\right\vert ^{2}\right) }{\left\vert a\right\vert
^{2}-\left\vert b\right\vert ^{2}}\,.$$ The formula (\ref{tangency}) gives for $w_{1},w_{2}$ the
same result as above in the case $\left\vert a\right\vert \neq \left\vert
b\right\vert $, but is applicable in the more general case $a\neq b$.
Indeed, $$\left( a-b\right) \overline{a}\overline{b}+\overline{a}-\overline{b}%
=\overline{c}\left( \left\vert a\right\vert ^{2}-\left\vert b\right\vert
^{2}\right) $$ and (\ref{SIDEN2}) imply $$\left\vert c\right\vert ^{2}-1=%
\frac{\left\vert a-b\right\vert ^{2}\left( 1-\left\vert a\right\vert
^{2}\right) \left( 1-\left\vert b\right\vert ^{2}\right) }{\left( \left\vert
a\right\vert ^{2}-\left\vert b\right\vert ^{2}\right) ^{2}}\,,$$ 
therefore 
\[
\frac{1\pm i\sqrt{\left\vert c\right\vert ^{2}-1}}{\overline{c}}=\frac{%
\left\vert a\right\vert ^{2}-\left\vert b\right\vert ^{2}\pm i\left\vert
a-b\right\vert \sqrt{\left( 1-\left\vert a\right\vert ^{2}\right) \left(
1-\left\vert b\right\vert ^{2}\right) }sign\left( \left\vert a\right\vert
^{2}-\left\vert b\right\vert ^{2}\right) }{\left( a-b\right) \overline{a}%
\overline{b}+\overline{a}-\overline{b}}.
\]
A simple calculation shows that $w_{1},w_{2}$ lie also on the circle
centered at $\frac{c}{2}$ passing trough the origin.
\end{nonsec}

\begin{figure}[H]
\centering
\begin{subfigure}[b]{0.45\textwidth}
\centering
\includegraphics[width=\textwidth]{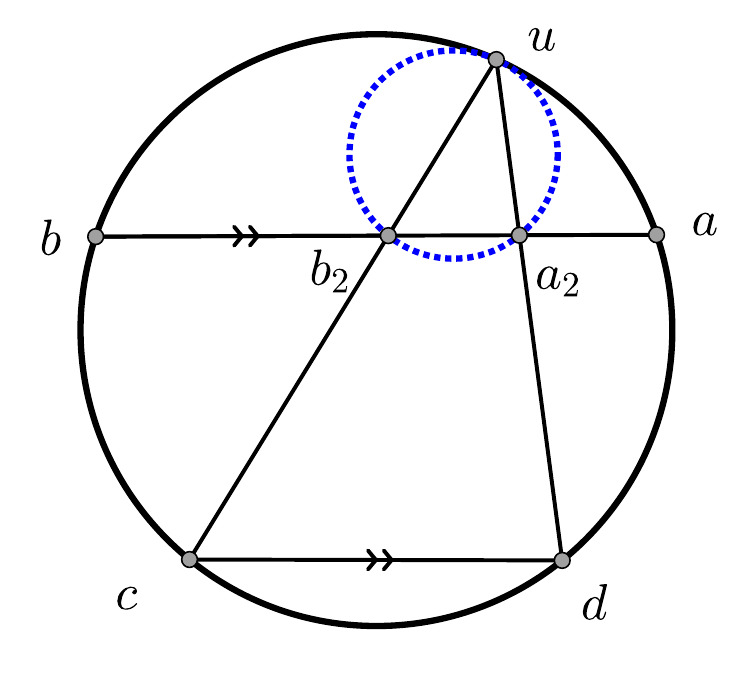}
\caption{}
\end{subfigure}
\hfill
\begin{subfigure}[b]{0.45\textwidth}
\centering
\includegraphics[width=\textwidth]{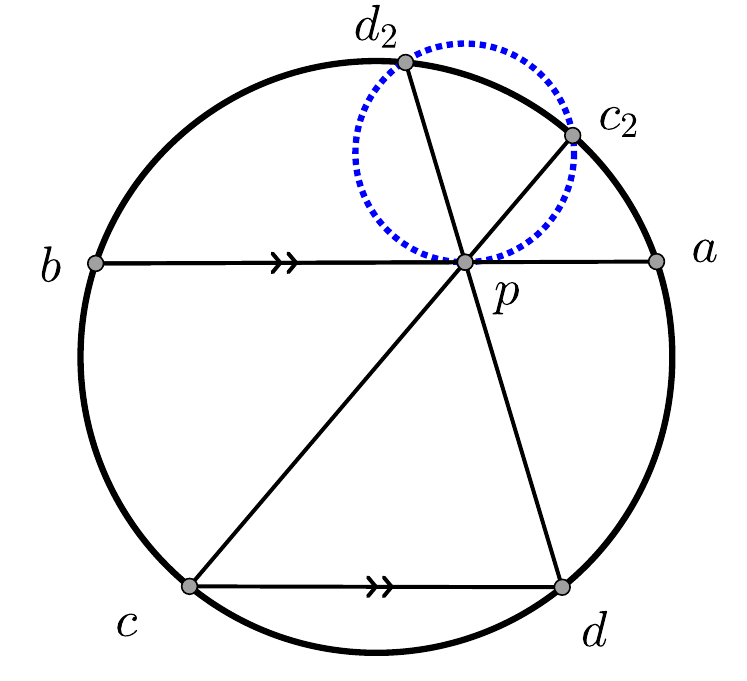}
\caption{}
\label{fig:three}
\end{subfigure}
\label{fig:four}
\caption{(A)
Corollary \ref{LittleThm} (1): If $L[a,b]$ and $L[c,d]$ are parallel, then the circle $C(b_2,a_2,u)$ is tangent to the unit circle at $u$ and \hfill \vspace{0.2cm}
  \centerline{
$  \exp h_{{\mathbb B}^2}(a_2, b_2)=|a,a_2,b_2, b|=
 \frac{|a-c|^2}{|a-d|^2} \,.$}
 \vspace{0.2cm}
(B)
Corollary \ref{LittleThm} (2): If $L[a,b]$ and $L[c,d]$ are parallel and $p \in [a,b]\,,$ then the circle $C(p, c_2,d_2)$ is tangent to the chord $[a,b]$ at the point $p$ and $|a,c_2,d_2,b|=|a,d,c,b| \,.$}
\label{fig:LittleThm}
\end{figure}
\begin{cor}
\label{LittleThm}
Let $a$, $b$, $c$, $d$ \ be points on the unit circle $\partial \mathbb{B}%
^{2}$, in this order.

(1) Let $u$ be a point on the arc of the unit circle between $a$ and $b$
that does not contain $c$, $d$ and let $a_{2}=\mathrm{LIS}\left[ a,b,u,d%
\right] $ and $b_{2}=\mathrm{LIS}\left[ a,b,u,c\right] $. Then 
\[
\exp h_{\mathbb{B}^{2}}\left( a_{2},b_{2}\right) =\left\vert \frac{(a-c)(b-d)%
}{(a-d)(b-c)}\right\vert 
\]%
is independent of $u$. Moreover, if the lines $L\left[ a,b\right] $ and $L%
\left[ c,d\right] $ are parallel, then the circle $C\left(
a_{2},b_{2},u\right) $ is tangent to the unit circle at the point $u$ and 
\[
\exp h_{\mathbb{B}^{2}}\left( a_{2},b_{2}\right) =\left\vert \frac{a-c}{a-d}%
\right\vert ^{2}. 
\]

(2) Let $p\in \mathbb{B}^{2}$. For every $z\in \partial \mathbb{B}^{2}$,
denote by $f_{p}\left( z\right) $ the point where $L[p,z]$ intersects again
the unit circle. Then 
\[
f_{p}\left( z\right) =-\frac{§z-p }{1-\overline{p} z}\text{
for all }z\in \partial \mathbb{B}^{2}
\]%
and  $f_{p}$ is  a M\"{o}bius self-map of the unit disk.

Let $c_{2}=f_{p}\left( c\right) $ and $d_{2}=f_{p}\left( d\right) $. Assume
that $p$ belongs to the segment $\left[ a,b\right] $. Then \newline
(i) $\left\vert a,c_{2},d_{2},b\right\vert =\left\vert a,d,c,b\right\vert $.
\newline
(ii) The lines $L[a,b]$ and $L[c,d]$ are parallel if and only if the circle $%
C\left( p,c_{2},d_{2}\right) $ is tangent to the chord $[a,b]$ at the point $%
p$.

(3) Let $p\in \mathbb{B}^{2}$ be on the segment $\left[ a,b\right] $. Then
the lines $L[a_{2},c_{2}]$ and $L[b_{2},d_{2}]$ intersect at a point $v$
that belongs to the unit circle. Moreover, if the lines $L[a,b]$ and $L[c,d]$
are parallel and if $p$ is the Hilbert midpoint of $a_{2}$ and $b_{2}$, then
$v=\left( c+d\right) /\left\vert c+d\right\vert $.
\end{cor}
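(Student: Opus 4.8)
The plan is to treat the three parts in turn, the main tools being the projection map of Lemma~\ref{lem:cross-ratio}(2), the involution $f_p$, and the tangency criterion of Lemma~\ref{newLem}(2). For part (1) I would apply Lemma~\ref{lem:cross-ratio}(2) with chord endpoints $a,b$ and apex $u$: the map $F(z)=\mathrm{LIS}[a,b,z,u]$ is the restriction of a M\"obius transformation with $F(a)=a$, $F(b)=b$, $F(c)=b_2$, $F(d)=a_2$. Since $F$ preserves the cross-ratio and $h_{\mathbb{B}^2}(a_2,b_2)=\log|a,a_2,b_2,b|$ (the chord through $a_2,b_2$ being $[a,b]$), this gives
\[
\exp h_{\mathbb{B}^2}(a_2,b_2)=|a,a_2,b_2,b|=|F(a),F(d),F(c),F(b)|=|a,d,c,b|=\frac{|a-c|\,|b-d|}{|a-d|\,|b-c|},
\]
which does not involve $u$; one need only check (e.g.\ via Ptolemy) that $a,a_2,b_2,b$ occur in this order, so that the logarithm is positive. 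If $L[a,b]\parallel L[c,d]$, then Lemma~\ref{newLem}(2), applied with interior points $a_2,b_2$, apex $u$, and chord points $L[u,a_2]\cap\partial\mathbb{B}^2=d$, $L[u,b_2]\cap\partial\mathbb{B}^2=c$, yields the tangency of $C(a_2,b_2,u)$ at $u$; reflecting in the common perpendicular bisector of the two parallel chords gives $|a-c|=|b-d|$ and $|a-d|=|b-c|$, whence $\exp h_{\mathbb{B}^2}(a_2,b_2)=|a-c|^2/|a-d|^2$.

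For part (2) I would first derive the formula for $f_p$ from the chord equation $w+z_1z_2\overline{w}=z_1+z_2$ of the unit circle: setting $w=p$, $z_1=z$ and solving for the second intersection gives $z_2=-\tfrac{z-p}{1-\overline{p}z}$, so $f_p=-T_p$ is an involutive M\"obius automorphism of $\mathbb{B}^2$ with $f_p(p)=0$ and, because $p\in[a,b]$, with $f_p(a)=b$ and $f_p(b)=a$. Statement (i) then follows from cross-ratio invariance: $|a,c,d,b|=|f_p(a),f_p(c),f_p(d),f_p(b)|=|b,c_2,d_2,a|=|a,d_2,c_2,b|$, and transposing the two middle entries gives $|a,c_2,d_2,b|=|a,d,c,b|$. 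For (ii) I would work with directed angles modulo $\pi$ and the tangent--chord criterion: $L[a,b]$ is tangent to $C(p,c_2,d_2)$ at $p$ if and only if $\measuredangle(L[a,b],L[p,c_2])=\measuredangle(L[d_2,p],L[d_2,c_2])$. Using the collinearities $L[p,c_2]=L[p,c]$ and $L[d_2,p]=L[d_2,d]$, together with the inscribed angle theorem on $\partial\mathbb{B}^2$ in the form $\measuredangle(L[d_2,d],L[d_2,c_2])=\measuredangle(L[c,d],L[c,c_2])$ and $L[c,c_2]=L[c,p]$, this condition collapses to $\measuredangle(L[a,b],L[c,d])=0$, i.e.\ to the parallelism of $L[a,b]$ and $L[c,d]$.

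For part (3) I would let $v$ be the second intersection of $L[a_2,c_2]$ with $\partial\mathbb{B}^2$ and consider the projection $F_v(z)=\mathrm{LIS}[a,b,z,v]$, which fixes $a,b$ and sends $c_2$ to $a_2$ by construction. The decisive identity is $|a,c_2,d_2,b|=|a,a_2,b_2,b|$: the left side equals $|a,d,c,b|$ by part (2)(i), and the right side equals $|a,d,c,b|$ by part (1). Since $F_v$ preserves the cross-ratio and $F_v(a)=a$, $F_v(b)=b$, $F_v(c_2)=a_2$, this forces $F_v(d_2)=b_2$, that is $v\in L[b_2,d_2]$, so $v=L[a_2,c_2]\cap L[b_2,d_2]\in\partial\mathbb{B}^2$. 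For the final assertion, in the parallel case Theorem~\ref{lemBisect}(2) identifies the Hilbert midpoint of $a_2,b_2$ as $p=\mathrm{LIS}[a,b,u,m]$ with $m=(c+d)/|c+d|$, so that $u,p,m$ are collinear; it then remains to show that $m$ is the apex realizing $v$, i.e.\ that $a_2,c_2,m$ are collinear, equivalently $F_m(c_2)=a_2$.

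I expect this last incidence to be the main obstacle. When $u$ lies on the common perpendicular bisector of the two chords the equality $v=m$ is immediate from the reflection symmetry (which then swaps $a_2\leftrightarrow b_2$ and $c_2\leftrightarrow d_2$ and fixes $m$), but for a general apex $u$ the Hilbert midpoint $p$ is \emph{not} fixed by that reflection, so the collinearity $F_m(c_2)=a_2$ must be proved either by a projective cross-ratio argument organised around the concurrency of $L[u,d]$, $L[m,c_2]$ and $L[a,b]$, or by direct computation from $c_2=f_p(c)$ and $p=\mathrm{LIS}[a,b,u,m]$ using the identity~(\ref{SIDEN}). Everything else is bookkeeping with cross-ratios and the elementary order and orientation checks needed to keep all Hilbert distances positive.
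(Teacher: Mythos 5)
Your treatment of parts (1) and (2) and of the first claim in part (3) is correct and follows essentially the same route as the paper: in (1) you use the projection $F(z)=\mathrm{LIS}[a,b,z,u]$ of Lemma \ref{lem:cross-ratio}(2) (the paper cites Theorem \ref{Hilbertmidpoint}(1), which is itself proved from that lemma), plus Lemma \ref{newLem}(2) and the isosceles-trapezoid equalities $|a-c|=|b-d|$, $|a-d|=|b-c|$ in the parallel case; in (2) you derive $f_p=-T_p$ from the chord equation exactly as the paper does, and your directed-angle argument for (ii) is the paper's tangent--chord/inscribed-angle argument in different notation; for the first claim of (3) your map $F_v$ reproduces the paper's auxiliary point $b_3=F_v(d_2)$, and the decisive equality $|a,c_2,d_2,b|=|a,a_2,b_2,b|$ (both sides equal to $|a,d,c,b|$, by (2)(i) and (1) respectively) forces $b_3=b_2$ just as in the paper.

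The gap is exactly where you say it is: the final claim of (3). You reduce it correctly (show that $m$, $c_2$, $a_2$ are collinear, i.e. $F_m(c_2)=a_2$; by the symmetric statement for $d_2,b_2$ one then gets $v=m$), but you only name two possible strategies without carrying either out, and your ``concurrency of $L[u,d]$, $L[m,c_2]$ and $L[a,b]$'' is merely a restatement of the claim, since $a_2=\mathrm{LIS}[a,b,u,d]$ by definition. The missing idea is how the hypothesis that $p$ is the Hilbert midpoint of $a_2,b_2$ enters. The paper closes this with a chain of three cross-ratio identities on the chord $[a,b]$: setting $a_3=\mathrm{LIS}[a,b,c_2,m]$, projection from $c_2$ (Lemma \ref{lem:cross-ratio}(1), using that $c$, $p$, $c_2$ are collinear) gives $|a,a_3,p,b|=|a,m,c,b|$; projection from $u$ (using $p=\mathrm{LIS}[a,b,u,m]$ and $b_2=\mathrm{LIS}[a,b,u,c]$) gives $|a,m,c,b|=|a,p,b_2,b|$; and the midpoint hypothesis gives $|a,p,b_2,b|=|a,a_2,p,b|$. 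Combining, $|a,a_3,p,b|=|a,a_2,p,b|$, so $a_3$ and $a_2$ divide $[a,b]$ in the same ratio and $a_3=a_2$. Your reflection-symmetry remark covers only the special position of $u$ on the common perpendicular bisector, and the brute-force computation you mention is not executed; as written, the last assertion of (3) remains unproved in your proposal.
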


\begin{proof}
(1) The claim follows from Theorem \ref{Hilbertmidpoint} (1).

(2) 
 The equation of the line passing through $ z_1 $ and $ z_2 $ is given by
$$
    (\overline{z_1}-\overline{z_2})z-(z_1-z_2)\overline{z}
    =\overline{z_1}z_2-z_1\overline{z_2}.
$$
So if $ z_1,z_2\in\partial\mathbb{B}^2 $, the above equation is written as
$$
    z+z_1z_2\overline{z}=z_1+z_2.
$$
Since $ z,f_p(z)\in\partial\mathbb{B}^2 $ and the point $ p $ is on
$ L[z,f_p(z)] $, the following holds
$$
  p+z f_p(z)\overline{p}=z+f_p(z),
$$
that is, 
$$
   f_p(z)=-\frac{z-p}{1-\overline{p}z}=e^{\pi i} \frac{z-p}{1-\overline{p}z} \,.
$$
Therefore $ f_p $ is an automorphism on $ \mathbb{B}^2 $.

(i) As M\"obius transformation $ f_p $ preserves cross-ratios,
$$
   |a,d,c,b|=|a,c_2,d_2,b|.
$$

\medskip
(ii) The lines ${L[a,b]}$ and $L[c,d]$ are parallel if and only if $%
\measuredangle (c_{2},p,a)=\measuredangle (p,c,d)$. The angles $%
\measuredangle (p,c,d)$ and $\measuredangle (c_{2},d_{2},p)$ are always
equal. Therefore, ${L[a,b]}$ and $L[c,d]$ are parallel if and only if $%
\measuredangle (c_{2},p,a)=\measuredangle (c_{2},d_{2},p)$. On the other
hand, $\measuredangle (c_{2},p,a)=\measuredangle (c_{2},d_{2},p)$ if and
only if the circle $C(p,c_{2},d_{2})$ and the line ${L}[a,b]$ are tangent at 
$p$. 
\medskip
\newline


\begin{figure}[H]
\centering
\begin{subfigure}[b]{0.45\textwidth}
\centering
\includegraphics[width=\textwidth]{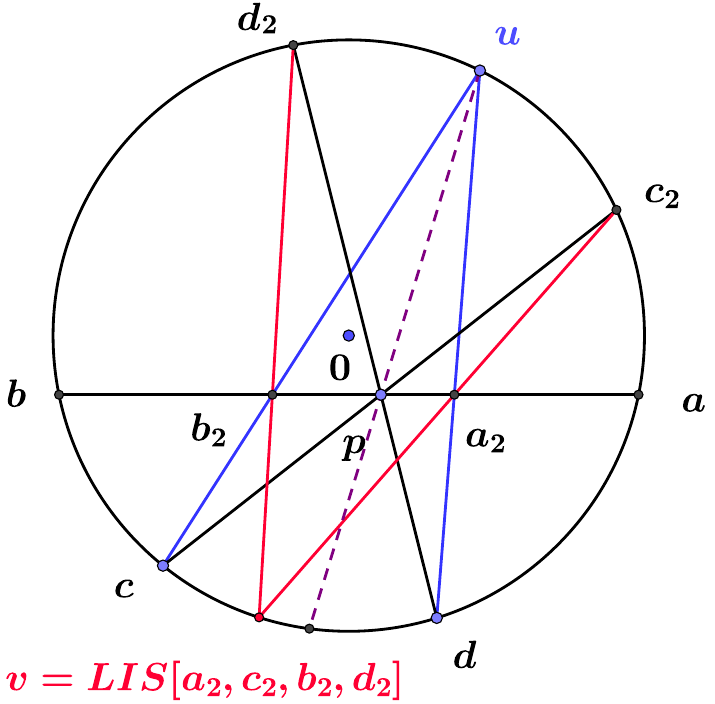}
\caption{}
\end{subfigure}
\hfill
\begin{subfigure}[b]{0.45\textwidth}
\centering
\includegraphics[width=\textwidth]{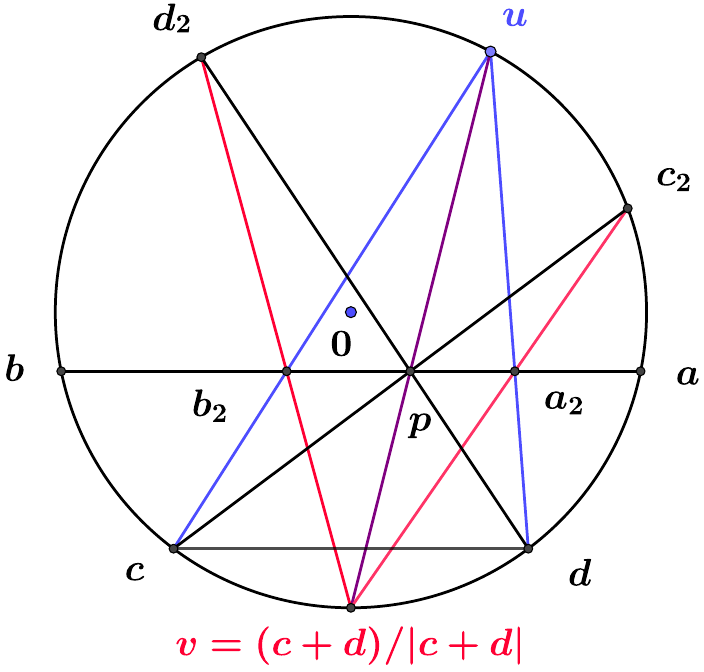}
\caption{}
\label{fig:cor325c}
\end{subfigure}
\label{fig:cor325a}
\caption{(A)
Corollary 3.25 (3): The case $L[a,b]$ and $L[c,d]$ are nonparallel.
(B)
Corollary 3.25 (3): The case $L[a,b]$ and $L[c,d]$ are parallel.}
\label{fig:Cor325b}
\end{figure}

(3) The line $L\left[ a_{2},c_{2}\right] $ intersects the unit circle at two
points: $c_{2}$ and, say, $v$. We prove that $v\in L[b_{2},d_{2}]$, hence $v=%
\mathrm{LIS}\left[ a_{2},c_{2},b_{2},d_{2}\right] $. \newline
Let $b_{3}=\mathrm{LIS}[a,b,v,d_{2}]$. By Lemma \ref{lem:cross-ratio} (1), $%
|a,a_{2},b_{3},b|=|a,c_{2},d_{2},b|$. By (2) and (1) in this corollary and
the definition of the Hilbert distance, 
\[
|a,c_{2},d_{2},b|=|a,d,c,b|=\exp h_{\mathbb{B}^{2}}\left( a_{2},b_{2}\right)
=|a,a_{2},b_{2},b|.
\]%

Then \[|a,a_{2},b_{3},b|=|a,a_{2},b_{2},b|\,,
\quad {\rm i.e.} \quad \frac{\left\vert
a-b_{3}\right\vert }{\left\vert a-a_{2}\right\vert }\frac{\left\vert
b-a_{2}\right\vert }{\left\vert b-b_{3}\right\vert }=\frac{\left\vert
a-b_{2}\right\vert }{\left\vert a-a_{2}\right\vert }\frac{\left\vert
b-a_{2}\right\vert }{\left\vert b-b_{2}\right\vert }\,,\] 
hence $\frac{%
\left\vert a-b_{3}\right\vert }{\left\vert b-b_{3}\right\vert }=\frac{%
\left\vert a-b_{2}\right\vert }{\left\vert b-b_{2}\right\vert }$. As $b_{3}$
and $b_{2}$ divide the segment $\left[ a,b\right] $ in the same ratio, we
obtain $b_{3}=b_{2}$, therefore $v\in L[b_{3},d_{2}]=L[b_{2},d_{2}].$

Now assume that the lines $L[a,b]$ and $L[c,d]$ are parallel and $p$ is the
Hilbert midpoint of $a_{2}$ and $b_{2}$. Denote $m=\left( c+d\right)
/\left\vert c+d\right\vert $. We have to prove that $c_{2}$, $a_{2}$ and $m$
are collinear. Similarly, it can be proved that $d_{2}$, $b_{2}$ and $m$ are
collinear. Then $m=\mathrm{LIS}\left[ a_{2},c_{2},b_{2},d_{2}\right] =v$. 
\newline
By Theorem \ref{Hilbertmidpoint} (2), $p=\mathrm{LIS}\left[ a,b,m,u\right] $%
. Let $a_{3}=\mathrm{LIS}[a,b,c_{2},m]$. We prove that $a_{3}=a_{2}$. By
 Lemma \ref{lem:cross-ratio} (1), using the projection from $c_{2}$ we get 
\[
\left\vert a,a_{3},p,b\right\vert =\left\vert a,m,c,b\right\vert .
\]%
Using the projection from $u$ and Lemma \ref{lem:cross-ratio} (1), it
follows that 
\[
\left\vert a,m,c,b\right\vert =\left\vert a,p,b_{2},b\right\vert .
\]%
As $p$ is the Hilbert midpoint of $a_{2}$ and $b_{2}$, 
\[
\left\vert a,p,b_{2},b\right\vert =\left\vert a,a_{2},p,b\right\vert .
\]%
The latter three equalities imply $\left\vert a,a_{3},p,b\right\vert
=\left\vert a,a_{2},p,b\right\vert $, hence $a_{3}$ and $a_{2}$ divide the
segment $\left[ a,b\right] $ in the same ratio, therefore $a_{3}=a_{2}$.
\end{proof}

%


%

\bigskip

	\begin{prop} 	
	Fix $ t > 0$. For $ -1 < a < b < 1 $, let $ f(a,b) = \left| a - b\right| $ and $ g(a,b) =  \left| -1, a, b, 1\right|  - t = 0 $,
	i.e.,
	$
	f(a,b) = b - a$, $g(a,b) = \frac{(1+b)(1-a)}{(1-b)(1+a)} - t = 0$. Then \begin{equation*}
		\left| a - b\right| \leq 2 \, {\rm th} \frac{h_D(a,b)}{4},
	\end{equation*}
	where $D=(-1,1)$.
\end{prop}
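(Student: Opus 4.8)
The plan is to read the data $f,g$ as a constrained optimisation problem. Since $D=(-1,1)=\mathbb{B}^1$, the cross-ratio appearing in $g$ is
\[
\left| -1,a,b,1\right| =\frac{(1+b)(1-a)}{(1+a)(1-b)}=e^{h_D(a,b)},
\]
so the constraint $g=0$ merely fixes the Hilbert distance at $h_D(a,b)=\log t$. I would therefore fix $h:=\log t$ and maximise $f(a,b)=|a-b|=b-a$ over the pairs lying on the level set $\{g=0\}$ inside $\{-1<a<b<1\}$; the asserted inequality is precisely the statement that this maximum equals $2\,{\rm th}(h/4)$.

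To locate the extremum I would use Lagrange multipliers, replacing $g=0$ by the equivalent (monotone) constraint $\log\left| -1,a,b,1\right|=h$, whose gradient is cleaner: its partials are $-2/(1-a^2)$ in $a$ and $2/(1-b^2)$ in $b$, while $\nabla f=(-1,1)$. The collinearity condition $\nabla f\parallel\nabla\log\left| -1,a,b,1\right|$ then forces $1-a^2=1-b^2$, and since $a<b$ the only admissible solution is $a=-b$.

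Substituting $a=-b$ into the constraint gives $\frac{(1+b)^2}{(1-b)^2}=t=e^{h}$, hence $\frac{1+b}{1-b}=e^{h/2}$ and $b=\frac{e^{h/2}-1}{e^{h/2}+1}={\rm th}(h/4)$, so that $f=b-a=2b=2\,{\rm th}(h/4)$ at this point. To see that this critical point is the maximum rather than a minimum, I would examine the two ends of the level curve: as $a\to-1^{+}$ the factor $(1-a)/(1+a)\to\infty$ forces $b\to-1^{+}$ so that the product stays equal to $t$, whence $b-a\to0$, and symmetrically $b-a\to0$ as $b\to1^{-}$. Since $f>0$ in the interior and vanishes at both ends, the interior critical point $a=-b$ yields the global maximum, giving $|a-b|\le2\,{\rm th}(h/4)$ with equality exactly when $a=-b$.

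The only genuinely delicate point in this route is the global-maximality (boundary) argument; the rest is a two-line computation. In fact I would prefer to bypass it using the closed form already at hand: by \eqref{explicit2} with $n=1$,
\[
4\,{\rm th}^{2}\!\left(\frac{h_D(a,b)}{4}\right)=4\,\frac{P-S}{P+S},\qquad P:=1-ab,\ \ S:=\sqrt{(1-a^2)(1-b^2)},
\]
and the elementary identity $P^{2}-S^{2}=(a-b)^{2}$ turns the desired inequality $(a-b)^{2}\le 4(P-S)/(P+S)$ into $(P-S)(P+S)\le 4(P-S)/(P+S)$; cancelling the positive factor $P-S$ leaves $(P+S)^{2}\le4$, i.e. $\sqrt{(1-a^2)(1-b^2)}\le 1+ab$, and squaring the positive sides reduces this to $0\le(a+b)^{2}$. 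This makes both the bound and its sharpness (equality iff $a=-b$) transparent.
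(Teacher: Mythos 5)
Your proposal is correct, and it both refines and extends the paper's own argument. The first half is essentially the paper's route: the paper likewise sets up a Lagrange multiplier problem for $f(a,b)=b-a$ subject to $g(a,b)=0$, arrives at $a^2-b^2=0$, discards $a=b$, and evaluates at $a=-b$ to obtain $2\,{\rm th}\left(h_D(a,b)/4\right)$. Your variant is cleaner in two respects: working with the constraint $\log\left|-1,a,b,1\right|=h$ gives the gradients $\left(-2/(1-a^2),\,2/(1-b^2)\right)$, so collinearity with $\nabla f=(-1,1)$ immediately forces $a=-b$ (the spurious branch $a=b$ never enters, being outside the domain $a<b$); and, more importantly, you justify that this critical point is the \emph{global} maximum by checking that $b-a\to 0$ at both ends of the level curve, a step the paper simply asserts without any boundary analysis. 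The second half of your proposal is a genuinely different proof: using \eqref{explicit2} with $n=1$ (legitimate, since the paper notes in Section 3 that these metrics make sense in the one-dimensional case), the identity $P^2-S^2=(a-b)^2$ with $P=1-ab$ and $S=\sqrt{(1-a^2)(1-b^2)}$ reduces the claim to $(P+S)^2\le 4$, i.e.\ to $0\le (a+b)^2$. This bypasses optimization altogether, proves the inequality pointwise, covers the degenerate case $a=b$ trivially, and makes the equality case $a=-b$ visible at a glance; it thereby removes the only delicate step (global maximality) on which both your variational argument and the paper's proof depend.
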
 	
\begin{proof}
	The task is to find the maximum of $ f(a,b) $ subject to the constraint $ g(a,b) = 0 $. Let
	\begin{equation*}
		L(a, b, \lambda) = f(a,b) - \lambda g(a,b) = b - a - \lambda \left( \frac{(1+b)(1-a)}{(1-b)(1+a)} - t \right).
	\end{equation*}
	To use the method of Lagrange multiplier, solve the following system of equations:
	
	\begin{equation}\label{1}
		\frac{\partial L}{\partial a} = -1 + \lambda \frac{2(b+1)}{(1-b)(1+a)^2} = 0,
	\end{equation}
	
	\begin{equation}\label{2}
		\frac{\partial L}{\partial b} = 1 + \lambda \frac{2(a-1)}{(1-b)^2 (1+a)} = 0,
	\end{equation}
	and
	\begin{equation}\label{3}
		g(a,b) - t = 0.
	\end{equation}
	
	From (\ref{1}) we obtain
	\begin{equation*}
		\lambda = \frac{(1-b)(1+a)^2}{2(b+1)}.
	\end{equation*}
	Substituting this $ \lambda $ into (\ref{2}), we have
	
	\begin{equation*}
		1 + \frac{(a-1)(1+a)}{(1+b)(1-b)} = 0,
	\end{equation*}
	$1-b^2+a^2-1=a^2-b^2=0$. Therefore, $ a = b $ or $ a = -b $.
	
	Substituting $ a = b$ into (\ref{3}), we see that
		$t = 1 $ yields a minimum of the function $f(a,b)$. %
	
	Substituting $ a = -b $ into (\ref{3}), we have a maximum of the function $f(a,b)$
	\begin{equation*}
		t = \frac{(1+b)^2}{(1-b)^2} \,. 
		\end{equation*}
		
	Next, let us consider
	\begin{equation*}
		g(a, -a) = \left( \frac{1 - a}{1 + a} \right)^2 = t\, \ \ (a < 0).
	\end{equation*}
	Because $a<b$ it follows that $ (a < 0).$ Now we have				
	\begin{equation*}
		\frac{1 + \left| a\right| }{1 -\left| a\right| } = \sqrt{t}, 
	\end{equation*}
	where $\left| a\right|  = \frac{\sqrt{t} - 1}{\sqrt{t} + 1}$.	
	Observing that $	t = \exp h_D(a,b)$, we get
	
	\begin{equation*}
	 \left| a-b \right| \le	2 \left| a\right|  = 2 \,\frac{\exp( h_D(a,b)/2) - 1}{\exp (h_D(a,b)/2)+ 1} = 2 \, {\rm th} \left( \frac{h_D(a,b)}{4} \right) \,,
	\end{equation*}
	where $D=(-1,1)$.
\end{proof}

\bigskip

\begin{lem} \label{my210} Let $D\subset \mathbb{R}^2 $ be a bounded convex domain and $a,b \in D. $
Then

\[ \left| a-b \right| \le  2 \, {\rm th} \left( \frac{h_D(a,b)}{4} \right) \,.
\]
Here equality holds if $D= \mathbb{B}^2$
and $a= -b.$
\end{lem}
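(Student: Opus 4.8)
The plan is to reduce the general convex-domain statement to the one-dimensional case $D=(-1,1)$ that was just settled in the preceding Proposition. The key observation is that the Hilbert metric $h_D(a,b)$ depends only on the configuration of $a,b$ and the two boundary points $u,v$ where the line $L[a,b]$ meets $\partial D$, in the order $u,a,b,v$. Since everything happens along a single chord, I would first restrict attention to the one-dimensional slice $I = L[a,b]\cap D$, which is an open segment with endpoints $u$ and $v$. On this segment the intrinsic Hilbert metric $h_I$ coincides exactly with $h_D$ restricted to points of $I$, because the defining cross-ratio in \eqref{hildef}--\eqref{cr} uses precisely the endpoints $u,v$ of that segment.

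The second step is to normalize. Affine maps of the line send segments to segments and preserve cross-ratios, hence preserve the Hilbert distance; they scale Euclidean distances by the ratio of segment lengths. So I would apply the affine map carrying the segment $[u,v]$ onto $[-1,1]$. Under this map $a,b$ go to points $a',b'\in(-1,1)$ with $h_{(-1,1)}(a',b')=h_D(a,b)$, while $|a'-b'| = \frac{2}{|u-v|}\,|a-b|$. Since $D$ is bounded and convex, the chord length $|u-v|$ is at most the diameter, but more usefully $|u-v|\le 2$ is \emph{not} automatic—this is where I expect the only real subtlety to lie.

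The main obstacle is therefore controlling $|a-b|$ versus $|a'-b'|$ correctly. The cleanest route avoids any diameter bound: I would prove the \emph{scale-invariant} inequality directly. Applying the one-dimensional Proposition to $a',b'\in(-1,1)$ gives $|a'-b'|\le 2\,{\rm th}(h_{(-1,1)}(a',b')/4)$. Now I rescale back. Writing $a'=\frac{2a-(u+v)}{v-u}$ and similarly for $b'$ (the affine map onto $[-1,1]$), one has $|a'-b'| = \frac{2|a-b|}{|u-v|}$, so
\[
\frac{2|a-b|}{|u-v|}\le 2\,{\rm th}\!\left(\frac{h_D(a,b)}{4}\right),
\]
which yields $|a-b|\le |u-v|\,{\rm th}(h_D(a,b)/4)$. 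Since the chord $[u,v]$ lies in the bounded convex domain $D$ and passes through interior points, and since the target inequality as stated has the constant $2$, I need $|u-v|\le 2$. This is exactly the point requiring a hypothesis on $D$: the statement as phrased must implicitly normalize $D$ so that every chord has length at most $2$, or equivalently $D$ has diameter $\le 2$; in the sharp case $D=\mathbb{B}^2$ one has $|u-v|\le 2$ with equality only for diameters. I would therefore invoke $|u-v|\le\operatorname{diam}(D)\le 2$ to conclude $|a-b|\le 2\,{\rm th}(h_D(a,b)/4)$.

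Finally I would verify the equality statement. When $D=\mathbb{B}^2$ and $b=-a$, the line $L[a,b]$ passes through the origin, so $u,v$ are antipodal points of $\partial\mathbb{B}^2$ and $|u-v|=2$; the affine normalization is then the identity up to rotation, and the extremal case $a'=-b'$ of the one-dimensional Proposition—where equality was achieved there—transfers to give equality here. This checks the displayed equality claim and completes the argument.
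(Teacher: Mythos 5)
Your reduction is essentially the paper's own argument: the paper also deduces the lemma directly from the preceding one-dimensional Proposition, and your chord-plus-affine-normalization is exactly the (unwritten) mechanism behind that deduction, since $h_D(a,b)$ involves only the chord endpoints $u,v$ and affine maps of a line preserve cross-ratios. The difference is that you carried the reduction out honestly, and what you found is real: the step $|u-v|\le 2$ is not available under the stated hypotheses. The inequality as printed is in fact false for a general bounded convex domain, because $h_D$ is invariant under dilations $x\mapsto\lambda x$ while $|a-b|$ is not; concretely, for $D=B^2(0,R)$ with $R>1$ and $b=-a$ one computes $h_D(a,-a)=2\log\frac{R+|a|}{R-|a|}$, hence $2\,{\rm th}\left(h_D(a,-a)/4\right)=2|a|/R<2|a|=|a-b|$. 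So the correct scale-invariant output of your argument, namely $|a-b|\le |u-v|\,{\rm th}\left(h_D(a,b)/4\right)\le {\rm diam}(D)\,{\rm th}\left(h_D(a,b)/4\right)$, needs the normalization ${\rm diam}(D)\le 2$ (e.g. $D\subseteq\mathbb{B}^2$) to give the stated constant $2$; the paper's one-line proof (``the inequality follows from the above proposition'') silently passes over this, so your flag is a correction to the statement rather than a gap in your proof. On the equality case the two arguments differ only cosmetically: you transfer the extremal configuration $a'=-b'$ of the one-dimensional Proposition back through the normalization, which is an isometry because $|u-v|=2$, while the paper invokes $\rho_{\mathbb{B}^2}(a,-a)=h_{\mathbb{B}^2}(a,-a)$ together with the equality case of the corresponding hyperbolic-metric bound; both are valid.
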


\begin{proof}
The inequality follows from the above proposition. The equality statement follows from the fact that
$\rho_{\mathbb{B}^2}(a, -a)= h_{\mathbb{B}^2}(a, -a)$ and the equality
case of the hyperbolic metric, see \cite[(4.25)]{hkv}.
\end{proof}

\section{Hilbert circles and spheres}

\begin{nonsec}{\bf Hilbert disks and circles.} For a bounded convex domain $D$ and a point $z \in D$ the set
\[B_h(z,t) = \{w \in D: h_D(z,w)<t\} \,, t >0,\]
is called the Hilbert disk with center $z$ and radius $t\,.$ 

Busemann \cite[\S 18]{bu} has proved
that Hilbert disks are convex sets, see also \cite[p. 170]{p}. Moreover, he has also demonstrated that ellipses
in Hilbert geometry are not convex sets \cite[p. 170]{p}.

For the case when $D$ is a triangle, 
Papadopoulos \cite[p. 168, Fig. 5.10]{p}
explicitly describes Hilbert circles as
hexagons of Euclidean geometry.

The next two figures display these disks
in a polygonal domain and in a circular sector domain, resp. It appears that the points of non-smoothness of $\partial D$ have influence on the Hilbert circle $\partial B_h(z,t).$
\end{nonsec}

\begin{figure}[H]
	\centering
	\includegraphics[width=0.55\linewidth]{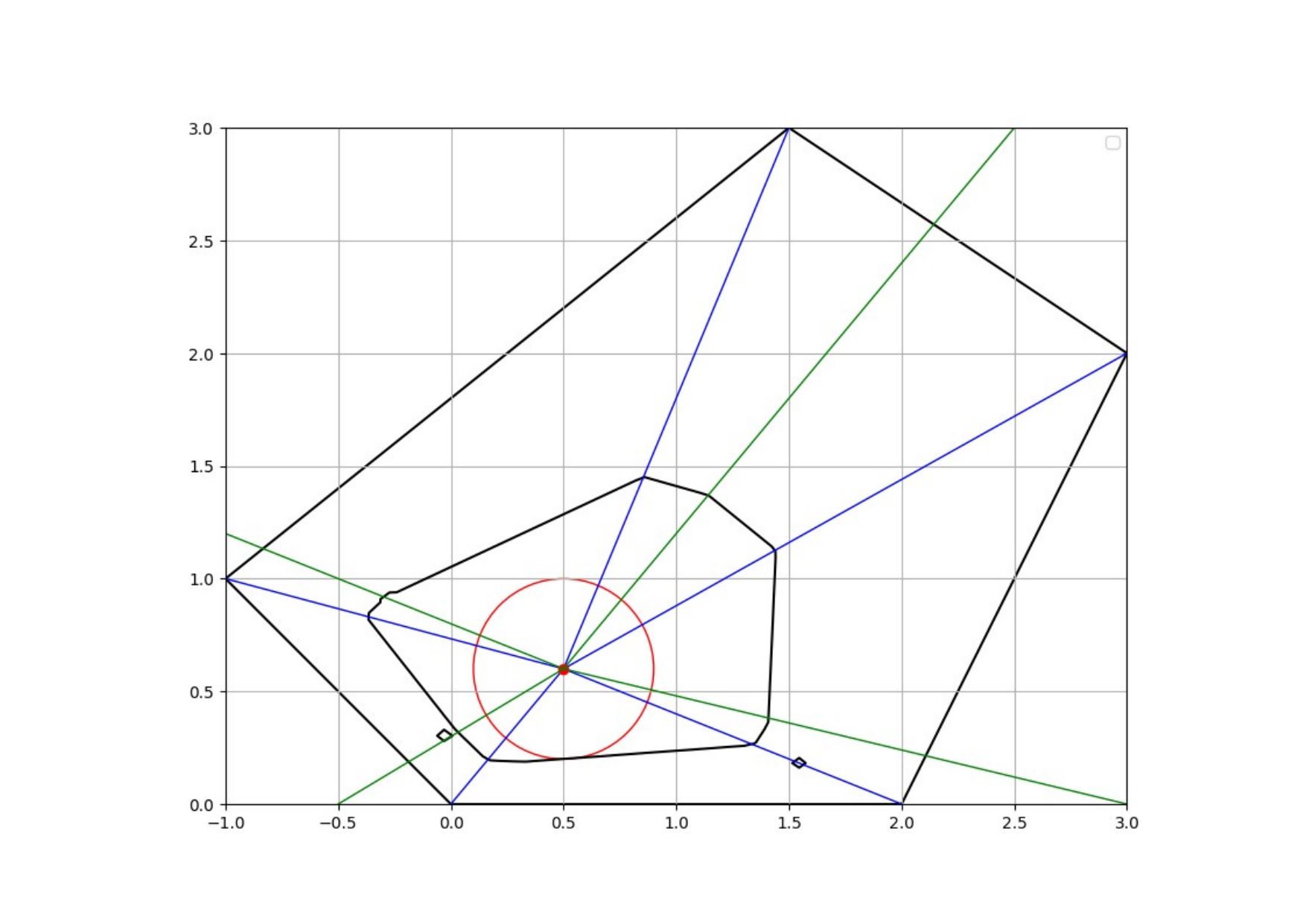}
	\caption{A Hilbert disk $B_h(z,t)$ and a maximal Euclidean circle with the same center $z$
	in a bounded convex polygonal domain. The points of non-smoothness of $\partial B_h(z,t)$ seem
	to be situated on the lines through vertices and the center $z \,.$
	}
	\label{figHdisk}
\end{figure}

\begin{figure}[H]
	\centering
\includegraphics[width=0.4\linewidth]{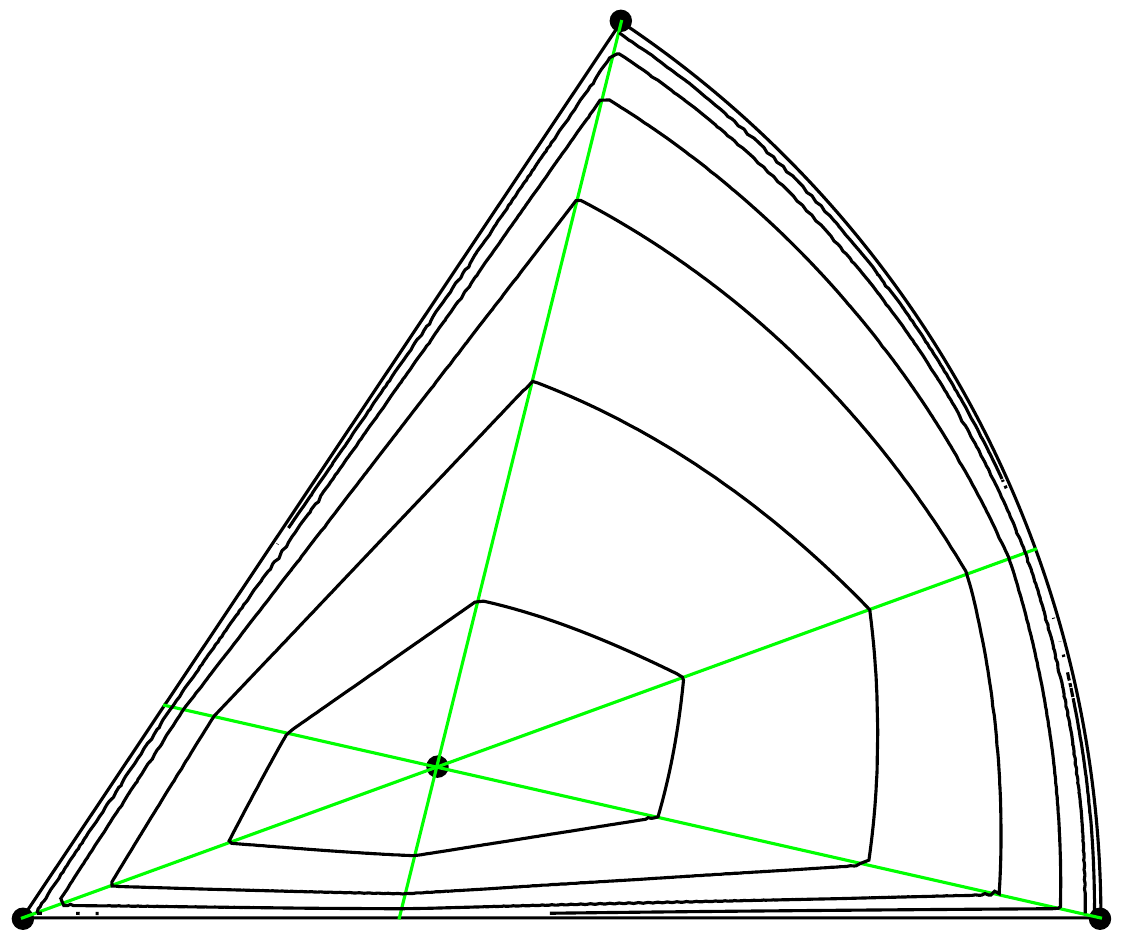}
	\caption{Some Hilbert circles centered at a point of a convex sector with different radii.
	}
	\label{fig:fig5}
\end{figure}

\begin{nonsec}{\bf Hilbert spheres in n-dimensional space.}
Let $\mathbb{B}^{n}$ be the unit ball in the Euclidean space $\mathbb{R}^{n}$
and $h=h_{\mathbb{B}^{n}}$ be the Hilbert distance in $\mathbb{B}^{n}$.

We prove that each sphere $S_{h}\left( c,R\right) =\left\{ x\in \mathbb{B}%
^{n}:h_{\mathbb{B}^{n}}\left( x,c\right) =R\right\} $, with $c\in \mathbb{B}%
^{n}$ and $R>0$, is an ellipsoid of revolution, with the center collinear
with $0$ and $c$, having the minor semi-axis along $c$ and the other
semi-axes equal to each other. This result generalizes \cite[Theorem 6.1]{afv}.
\end{nonsec}

\begin{thm}
\label{Basphere}Let $\mathbb{B}^{n}$ be the unit ball in the Euclidean space 
$\mathbb{R}^{n}$ and $h=h_{\mathbb{B}^{n}}$ be the Hilbert distance in $%
\mathbb{B}^{n}$. The sphere $S_{h}\left( c,R\right) =\left\{ x\in \mathbb{B}%
^{n}:h_{\mathbb{B}^{n}}\left( x,c\right) =R\right\} $, with $c\in \mathbb{B}%
^{n}$ and $R>0$, is an ellipsoid of revolution, with the center 
\[
c^{\prime }=\frac{c}{ {\ch} ^{2}\left( \frac{R}{2}\right) -\left\vert
c\right\vert ^{2} {\sh} ^{2}\left( \frac{R}{2}\right) } \,\text{.} 
\]%
Write
\[
a_{\min }=\frac{1}{2}\frac{%
\left( 1-\left\vert c\right\vert ^{2}\right)  {\sh} (R)}{ {\ch} ^{2}\left( 
\frac{R}{2}\right) -\left\vert c\right\vert ^{2} {\sh} ^{2}\left( \frac{R}{2}%
\right) }\,,\quad
a_{\max }=\frac{ {\sh} (\frac{R%
}{2})\sqrt{1-\left\vert c\right\vert ^{2}}}{\sqrt{ {\ch} ^{2}\left( \frac{R}{2%
}\right) -\left\vert c\right\vert ^{2} {\sh} ^{2}\left( \frac{R}{2}\right) }} \,.
\]
This ellipsoid  has the minor semi-axis, along $c$, equal to  $a_{\min }$ and every other semi-axis equal to $a_{\max }\,.$%
If $c=0$, then $S_{h}\left( c,R\right) $ is the Euclidean sphere centered
at $c$ with radius $\th \left( \frac{R}{2}\right) $.
\end{thm}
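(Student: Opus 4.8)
The plan is to convert the defining condition into a single algebraic equation and then read off everything from one eigenvector observation. Starting from the formula $\ch(h_{\mathbb{B}^n}(a,b)/2) = (1-a\cdot b)/\sqrt{(1-|a|^2)(1-|b|^2)}$ recalled before Theorem \ref{4.2} (from \cite[Cor.~3.5]{rv}), the condition $x\in S_h(c,R)$, i.e. $h_{\mathbb{B}^n}(x,c)=R$, reads
\[
\ch\Big(\frac{R}{2}\Big)\sqrt{(1-|x|^2)(1-|c|^2)} = 1 - x\cdot c .
\]
Writing $\lambda=\ch^2(R/2)$ and $\mu=1-|c|^2$ and squaring gives $\lambda\mu(1-|x|^2)=(1-x\cdot c)^2$, which rearranges to
\[
\lambda\mu|x|^2 + (c\cdot x)^2 - 2\,c\cdot x + (1-\lambda\mu) = 0 .
\]
The quadratic part here is the form $x\mapsto x^{T}Ax$ with $A = \lambda\mu I + cc^{T}$, which is positive definite (as $\lambda\mu>0$ and $cc^{T}$ is positive semidefinite), so this equation already exhibits $S_h(c,R)$ as an ellipsoid; it remains to locate its center and its semi-axes.

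The decisive simplification is that $c$ is an eigenvector of $A$: indeed $Ac = (\lambda\mu + |c|^2)c$, while every vector orthogonal to $c$ is an eigenvector with eigenvalue $\lambda\mu$. Thus $A$ has the single eigenvalue $\Delta := \lambda\mu + |c|^2$ in the direction of $c$ and the eigenvalue $\lambda\mu$ with multiplicity $n-1$ in the orthogonal directions, so the ellipsoid is automatically one of revolution about the line through $0$ and $c$, with $n-1$ equal semi-axes. Using $\lambda-1=\sh^2(R/2)$ one computes $\Delta = \ch^2(R/2)-|c|^2\sh^2(R/2)$, and the center is $c' = A^{-1}c = c/\Delta$, matching the claim.

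Completing the square turns the equation into $(x-c')^{T}A(x-c') = c\cdot A^{-1}c - (1-\lambda\mu)$, and I would simplify the right-hand side to $\ch^2(R/2)\sh^2(R/2)\mu^2/\Delta$, the clean step being the identity $\lambda\mu + |c|^2 - 1 = (\lambda-1)\mu$. Diagonalizing in principal axes, the squared semi-axis in each direction equals this common constant divided by the corresponding eigenvalue. The axis along $c$ has eigenvalue $\Delta$, giving $a_{\min}^2 = \ch^2(R/2)\sh^2(R/2)\mu^2/\Delta^2$, hence $a_{\min} = \tfrac12(1-|c|^2)\sh(R)/\Delta$ after $\sh R = 2\sh(R/2)\ch(R/2)$; the orthogonal axes have eigenvalue $\lambda\mu=\ch^2(R/2)\mu$, giving $a_{\max}^2 = \sh^2(R/2)\mu/\Delta$. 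These are exactly the stated values. To confirm that the $c$-direction really is the minor axis I would check $a_{\min}\le a_{\max}$, which reduces to $\Delta \ge \ch^2(R/2)\mu$, i.e. to $|c|^2\ge 0$, with equality only for $c=0$. Specializing $c=0$ gives $\Delta=\ch^2(R/2)$ and $a_{\min}=a_{\max}=\th(R/2)$, recovering the Euclidean sphere.

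The conceptual content is entirely in the eigenvector remark: once one sees that $c$ diagonalizes $A$, the shape (ellipsoid of revolution), the location of the center (on the line through $0$ and $c$), and the equality of all but one semi-axis follow immediately. The main obstacle is therefore purely computational---carrying out the completion of the square and reducing the resulting constant to the compact closed forms for $a_{\min}$ and $a_{\max}$---where the identity $\lambda\mu+|c|^2-1=(\lambda-1)\mu$ together with the half- and double-angle relations for $\sh$ and $\ch$ do all the work. I expect no genuinely hard step.
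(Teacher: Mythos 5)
Your computations are all correct, and your route is essentially the paper's: the paper also squares the identity $\ch (h_{\mathbb{B}^n}(x,c)/2)=(1-c\cdot x)/\sqrt{(1-|c|^2)(1-|x|^2)}$ to obtain the quadric $x^{T}(\alpha I_n+cc^{T})x-2c^{T}x+1-\alpha=0$ with $\alpha=\ch^2(R/2)(1-|c|^2)$ (your $\lambda\mu$), and it exploits exactly the same eigenstructure of $\alpha I_n+cc^{T}$ --- eigenvalue $\alpha+|c|^2$ along $c/|c|$, eigenvalue $\alpha$ on the orthogonal complement. The only difference is mechanical: the paper diagonalizes by decomposing $x=\lambda c/|c|+z$ with $z\perp c$ and completes the square in the scalar $\lambda$, whereas you complete the square in matrix form via $c'=A^{-1}c$. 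Your identities $\Delta=\ch^2(R/2)-|c|^2\sh^2(R/2)$, $\Delta-1=(\lambda-1)\mu$, and the resulting $a_{\min}$, $a_{\max}$ all check out, as does your verification $a_{\min}\le a_{\max}$ (which the paper does not even spell out).

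One step is missing, and the paper carries it out explicitly as its closing paragraph: you prove only the inclusion $S_h(c,R)\subset\mathcal{E}$, where $\mathcal{E}$ is the quadric obtained after squaring. The theorem asserts that $S_h(c,R)$ \emph{is} the ellipsoid, i.e.\ set equality, and squaring can a priori introduce extraneous solutions --- in particular points of $\mathcal{E}$ outside the closed unit ball, where $h_{\mathbb{B}^n}(\cdot,c)$ is not even defined. The fix is short: if $x$ satisfies $\lambda\mu(1-|x|^2)=(1-c\cdot x)^2$, then the right-hand side is nonnegative and $\lambda\mu>0$ forces $|x|\le 1$; if $|x|=1$ then $c\cdot x=1$, which is impossible by Cauchy--Schwarz since $|c|<1$; hence $x\in\mathbb{B}^n$, and because $1-c\cdot x>0$ one may take square roots to recover $\ch (h_{\mathbb{B}^n}(x,c)/2)=\ch (R/2)$, i.e.\ $x\in S_h(c,R)$. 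With that paragraph added, your proof is complete and matches the paper's.
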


\begin{proof}
For $x\in \mathbb{B}^{n}$, the explicit formula of the Hilbert metric in the
unit ball shows that $h_{\mathbb{B}^{n}}\left( x,c\right) =R$ if and only if 
\begin{equation}\label{NSC}
\left( 1-c\cdot x\right) ^{2}= {\ch} ^{2}\left( \frac{R}{2}\right) \left(
1-\left\vert c\right\vert ^{2}\right) \left( 1-\left\vert x\right\vert
^{2}\right) .  
\end{equation}%
Let $\alpha := {\ch} ^{2}\left( \frac{R}{2}\right) \left( 1-\left\vert
c\right\vert ^{2}\right) $. Then $\alpha >0$.

If $c=0$, then (\ref{NSC}) reduces to $\left\vert x\right\vert =\th
\left( \frac{R}{2}\right) $, that implies $x\in \mathbb{B}^{n}$. \newline
Assume that $c\in \mathbb{B}^{n}\setminus \left\{ 0\right\} $. Using the
matrix notation of vectors in $\mathbb{R}^{n}$, $c\cdot x=c^{T}x=x^{T}c$, we
write (\ref{NSC})\ under the equivalent form 
\[
x^{T}(\alpha I_{n}+cc^{T})x-2c^{T}x+1-\lambda =0. 
\]%
The above equation defines an $\left( n-1\right) $-dimensional ellipsoid, as
the matrix $\alpha I_{n}+cc^{T}$ is positive definite, with the eigenvalues $%
\lambda _{1}=\alpha +\left\vert c\right\vert ^{2}$ and $\lambda _{k}=\alpha $
for $k=2,...,n$ and an orthonormal basis of corresponding eigenvectors $%
v_{1}=\frac{c}{\left\vert c\right\vert }$ and $v_{k}$ with $k=2,...,n$.

In order to obtain a canonical equation of the above ellipsoid, decompose $%
x=\lambda \frac{c}{\left\vert c\right\vert }+z$, where $z$ is orthogonal to $%
c$. These conditions hold if and only if $\lambda =\frac{c\cdot x}{%
\left\vert c\right\vert }$ and $z=x-\frac{c\cdot x}{\left\vert c\right\vert
^{2}}c$. Then $c\cdot x=\left\vert c\right\vert \lambda $ and $\left\vert
x\right\vert ^{2}=\lambda ^{2}+\left\vert z\right\vert ^{2}$. Substituting
in (\ref{NSC}) we get $(1-\left\vert c\right\vert \lambda )^{2}=\alpha
\left( 1-\lambda ^{2}-\left\vert z\right\vert ^{2}\right) $, i.e. 
\[
\left( \left\vert c\right\vert ^{2}+\alpha \right) \lambda ^{2}-2\left\vert
c\right\vert \lambda +\alpha \left\vert z\right\vert ^{2}=\alpha -1\text{.} 
\]%
The equation (\ref{NSC}) is equivalent to 
\begin{equation}\label{Elfin}
\frac{\left( \lambda -\frac{\left\vert c\right\vert }{\left\vert
c\right\vert ^{2}+\alpha }\right) ^{2}}{\left( \frac{\sqrt{\alpha \left(
\left\vert c\right\vert ^{2}+\alpha -1\right) }}{\left\vert c\right\vert
^{2}+\alpha }\right) ^{2}}+\frac{\left\vert z\right\vert ^{2}}{\left( \sqrt{%
\frac{\left\vert c\right\vert ^{2}+\alpha -1}{\left\vert c\right\vert
^{2}+\alpha }}\right) ^{2}}=1,  
\end{equation}%
which is the equation of an $\left( n-1\right) $-dimensional ellipsoid $%
\mathcal{E}$ with the center at $\frac{c}{\left\vert c\right\vert
^{2}+\alpha }$, having the minor semi-axis $a_{\min }=\frac{\sqrt{\alpha
\left( \left\vert c\right\vert ^{2}+\alpha -1\right) }}{\left\vert
c\right\vert ^{2}+\alpha }$ and the other semi-axes equal to $a_{\max }=%
\sqrt{\frac{\left\vert c\right\vert ^{2}+\alpha -1}{\left\vert c\right\vert
^{2}+\alpha }}$.

After some calculations we get \[a_{\min }=\frac{1}{2}\frac{\left(
1-\left\vert c\right\vert ^{2}\right)  {\sh} (R)}{ {\ch} ^{2}\left( \frac{R}{2}%
\right) -\left\vert c\right\vert ^{2} {\sh} ^{2}\left( \frac{R}{2}\right) }\,  
 \quad \quad
{\rm and}\quad \quad a_{\max }=\frac{ {\sh} (\frac{R}{2})\sqrt{1-\left\vert c\right\vert ^{2}}%
}{\sqrt{ {\ch} ^{2}\left( \frac{R}{2}\right) -\left\vert c\right\vert
^{2} {\sh} ^{2}\left( \frac{R}{2}\right) }}\,.\]

We proved that $S_{h}\left( c,R\right) \subset \mathbb{B}^{n}\cap \mathcal{E}
$. Conversely, we check that $\mathcal{E}\subset S_{h}\left( c,R\right) $.
Let $x\in \mathcal{E}$. Using the decomposition $x=\lambda \frac{c}{%
\left\vert c\right\vert }+z$, with $\lambda \in \mathbb{R}$ and $z\in 
\mathbb{R}^{n}$ orthogonal to $c$, we see that $\lambda $ and $z$ satisfy (%
\ref{Elfin}), hence (\ref{NSC}) holds. The right-hand side of (\ref{NSC}) is
non-negative, therefore $x\in \mathbb{B}^{n}$. Moreover, (\ref{NSC}) shows
that $h_{\mathbb{B}}\left( c,x\right) =R$. In conclusion, $S_{h}\left(
c,R\right) =\mathcal{E}$.
\end{proof}

\begin{rem}
Using the notations $s=\left\vert c\right\vert $ and $k={\th} \left( \frac{R%
}{2}\right) $, the formulas from Theorem \ref{Basphere} can be written as
follows: 
\[
c^{\prime }=\frac{1-k^{2}}{1-s^{2}k^{2}}c\text{, } \quad a_{\min }=\frac{k\left(
1-s^{2}\right) }{1-s^{2}k^{2}}\text{,}\quad a_{\max }=\frac{k\sqrt{1-s^{2}}}{\sqrt{%
1-s^{2}k^{2}}}.\text{ } 
\]%
Rotational symmetry of the ellipsoid $\partial E_{c,R}$ around the line $%
L[0,c]$ explains why all semi-axes orthogonal to the vector $c$ are equal.
\end{rem}

%
\section{H\"older continuity}
\begin{nonsec}{\bf The special function
$\varphi_K(r)$ of the Schwarz lemma.}
Let $F_0, F_1$ be non-empty subsets of $\mathbb{R}^n$ and denote the family of all closed non-constant curves joining $F_0$ and $F_1$ in $\mathbb{R}^n$ by $\Delta(F_0, F_1; \mathbb{R}^n)$. The Gr\"otzsch capacity is the decreasing homeomorphism defined as $\gamma_n : (1,\infty) \to (0,\infty)$,
\[
\gamma_n(s) = {\M}(\Delta(\mathbb{B}^n, [se_1, \infty]; \mathbb{R}^n)), \quad s > 1,
\]
where $e_1$ is the first unit vector and ${\M}$ stands for the conformal modulus (see \cite[7.17]{hkv}, p. 121). For the definition and more details about the conformal modulus, see \cite{hkv}, pp. 103-131. In the special case $n=2$, we have the explicit formulas \cite{hkv}, (7.18), p. 122
\[
\gamma_2(1/r) = \frac{2\pi}{\mu(r)}, \quad \mu(r) = \frac{\pi}{2} \frac{\K(\sqrt{1 - r^2})}{\K(r)}, \quad \K(r) = \int_0^1 \frac{dx}{\sqrt{(1-x^2)(1 - r^2x^2)}}
\]
with $0<r<1$.

The function $\varphi_K : [0,1] \to [0,1]$, which will occur in the quasiregular version of the Schwarz lemma as well as in its many applications. This function is defined as follows \cite[Eq (9.13), p. 167]{hkv}. For $0 < r < 1$ and $K > 0$ we define the special function
\begin{equation*}
	\varphi_K(r) = \frac{1}{\gamma_n^{-1}(K \gamma_n(1/r))} = \varphi_{K,n}(r) 
\end{equation*}
and set $\varphi_K(0) = 0$, $\varphi_K(1) = 1$. It is easy to see that $\varphi_K : [0,1] \to [0,1]$ is a homeomorphism.
\end{nonsec}

\begin{thm} (See \cite[Thm 16.39, p.313]{hkv}) \label{2.12} 
	If \( f : \mathbb{B}^2 \to \mathbb{B}^2 \) is a non-constant \( K \)-quasiregular mapping, then
	\[
	\rho_{\mathbb{B}^2}(f(a), f(b)) \leq c(K) \max \left\{ \rho_{\mathbb{B}^2}(a, b), \rho_{\mathbb{B}^2}(a, b)^{1/K} \right\}
	\]
	for all \( a, b \in \mathbb{B}^2 \), where \( c(K) = 2\,\mathrm{arth}(\varphi_K(\mathrm{th}\tfrac{1}{2})) \), and
	\[
	K \leq u(K - 1) + 1 \leq \log(\mathrm{ch}(K\,\mathrm{arch}(e))) \leq c(K) \leq v(K - 1) + K
	\]
	with \( u = \mathrm{arch}(e)\,\mathrm{th}(\mathrm{arch}(e)) > 1.5412 \) and \( v = \log\left(2(1 + \sqrt{1 - 1/e^2})\right) < 1.3507 \). In particular, \( c(1) = 1 \). Moreover, if 
\( f : \mathbb{B}^2 \to D \) is a non-constant \( K \)-quasiregular mapping and
$D$ is conformally equivalent to \(\mathbb{B}^2 \), then 	
\[
	\rho_{D}(f(a), f(b)) \leq c(K) \max \left\{ \rho_{\mathbb{B}^2}(a, b), \rho_{\mathbb{B}^2}(a, b)^{1/K} \right\}
	\]
for all \( a, b \in \mathbb{B}^2 \).	
\end{thm}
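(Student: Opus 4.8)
The plan is to treat Theorem~\ref{2.12} in layers. The Hölder-type inequality for $K$-quasiregular self-maps of $\mathbb{B}^2$ and the two-sided chain of estimates for $c(K)$ are the substance of \cite[Thm 16.39]{hkv}, so my task is to organize their backbone rather than to reprove the underlying distortion theory. I would begin from the quasiregular Schwarz lemma, which, after using the M\"obius invariance of $\rho_{\mathbb{B}^2}$ to normalize one image point to the origin, takes the form
\[
\th\!\left(\frac{\rho_{\mathbb{B}^2}(f(a),f(b))}{2}\right)\le \varphi_K\!\left(\th\!\left(\frac{\rho_{\mathbb{B}^2}(a,b)}{2}\right)\right).
\]
Since $\th$, $\varphi_K$ and $\arth$ are increasing, applying $2\arth$ yields $\rho_{\mathbb{B}^2}(f(a),f(b))\le \psi_K(\rho_{\mathbb{B}^2}(a,b))$ with $\psi_K(t)=2\arth(\varphi_K(\th(t/2)))$, reducing everything to the one-variable inequality $\psi_K(t)\le c(K)\max\{t,\,t^{1/K}\}$ with $c(K)=\psi_K(1)$.

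To obtain this reduced inequality I would show that $t\mapsto \psi_K(t)/t^{1/K}$ is increasing on $(0,1]$ and that $t\mapsto \psi_K(t)/t$ is decreasing on $[1,\infty)$; in each regime the relevant ratio is then maximized at $t=1$, which both explains the split $\max\{t,t^{1/K}\}$ and identifies the constant as $c(K)=2\arth(\varphi_K(\th\tfrac{1}{2}))$. These monotonicities follow by logarithmic differentiation combined with the standard monotonicity properties of $\varphi_K$, in the same spirit as the monotone-l'H\^opital argument of Proposition~\ref{FujiProp}. The chain $K\le u(K-1)+1\le \log(\ch(K\,\mathrm{arch}(e)))\le c(K)\le v(K-1)+K$ is then a question about the single real variable $K$: since $c(K)$ sees $\varphi_K$ only through its value at $r_0=\th\tfrac{1}{2}$, the estimates reduce to the sharp two-sided bounds for $\varphi_K(r_0)$ and elementary monotonicity in $K$ recorded in \cite{hkv}, while $c(1)=1$ is immediate from $\varphi_1=\mathrm{id}$.

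For the concluding \emph{Moreover} statement, which is the part carrying genuinely independent content, I would argue by transport of structure. Fix a conformal homeomorphism $g\colon D\to\mathbb{B}^2$, available by hypothesis. Post-composition with the conformal, hence $1$-quasiregular, map $g$ does not increase the maximal dilatation, so $g\circ f\colon\mathbb{B}^2\to\mathbb{B}^2$ is again non-constant and $K$-quasiregular, and the first layer applied to it gives
\[
\rho_{\mathbb{B}^2}\!\big(g(f(a)),g(f(b))\big)\le c(K)\max\{\rho_{\mathbb{B}^2}(a,b),\,\rho_{\mathbb{B}^2}(a,b)^{1/K}\}.
\]
Since the hyperbolic metric is a conformal invariant, $g$ is an isometry $(D,\rho_D)\to(\mathbb{B}^2,\rho_{\mathbb{B}^2})$, so $\rho_{\mathbb{B}^2}(g(f(a)),g(f(b)))=\rho_D(f(a),f(b))$ and the asserted bound follows at once.

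The main obstacle is that the heart of the theorem — the Schwarz-lemma inequality together with the sharp two-sided control of the special function $\varphi_K$ — is deep quasiregular-mapping theory that I would import from \cite{hkv} rather than reestablish; the portions carrying independent work are the one-variable reduction to $c(K)=\psi_K(1)$ and the conformal-invariance step for the image domain.
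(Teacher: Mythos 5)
Your proposal is correct and takes essentially the same route as the paper: the paper also disposes of the first part by citing \cite[Thm 16.39, p.313]{hkv} and settles the \emph{Moreover} clause by the conformal invariance of the hyperbolic metric, which is exactly your transport argument with $g\circ f$. The additional detail you supply (the one-variable reduction $\psi_K(t)\le c(K)\max\{t,\,t^{1/K}\}$ inside the imported theorem and the dilatation bookkeeping showing $g\circ f$ is still $K$-quasiregular) merely fleshes out what the paper's two-line proof leaves implicit.
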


\begin{proof}
The first part is proven in \cite[Thm 16.39, p.313]{hkv} and the second part follows from the conformal invariance
of the hyperbolic metric.
\end{proof}

\begin{nonsec}{\bf Apollonian metric.}\label{apomet}
We turn our attention to the Apollonian metric \cite{b2,b3}. Let \( D \subset \mathbb{R}^n \) be a domain.  
The {\it Apollonian metric} \( \alpha_D(a, b) \) between two points \( a, b \in D \) is defined as \cite{gh}
\[
\alpha_D(a, b) = \sup_{x, y \in \partial D} \log \frac{|x - b||y - a|}{|x - a||y - b|}.
\]
Trivially, for convex bounded domains $D,$ we have
$h_D \le \alpha_D.$
It is well-known \cite{b2} that $\rho_{\mathbb{B}^2} = \alpha_{\mathbb{B}^2}$ and   $\rho_{\mathbb{H}^2} = \alpha_{\mathbb{H}^2}.$
\end{nonsec}

\begin{thm} \cite[Thm 6.1]{b2} \label{Bthm} Let $D$ be
a bounded convex plane domain. Then for all $z,w \in D$
\[
\frac{1}{2}\,\alpha_D(z,w) \le \rho_D(z,w) \le 4 \sh(\frac{1}{2}\, \alpha_D(z,w))\,.
\]
\end{thm}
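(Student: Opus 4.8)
The plan is to prove the two inequalities separately, using convexity to sandwich $D$ between supporting half-planes from the outside (for the lower bound) and inscribed disks from the inside (for the upper bound). In each case the bridge is the equality $\rho_{\mathbb{H}^2}=\alpha_{\mathbb{H}^2}$ (and $\rho_{\mathbb{B}^2}=\alpha_{\mathbb{B}^2}$) from \ref{apomet}, combined with the conformal invariance of $\rho$ and the M\"obius invariance of $\alpha$, which transport these model identities to an arbitrary half-plane or disk.

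For the lower bound $\tfrac12\alpha_D\le\rho_D$, fix $z,w\in D$. For each $x\in\partial D$ convexity furnishes a supporting half-plane $H_x\supseteq D$ with $x\in\partial H_x$. Monotonicity of the hyperbolic metric under inclusion gives $\rho_D(z,w)\ge\rho_{H_x}(z,w)$, and mapping $H_x$ by a M\"obius transformation onto $\mathbb{H}^2$ yields $\rho_{H_x}=\alpha_{H_x}$. In the supremum defining $\alpha_{H_x}$ I take the first boundary point to be $x$ and let the second point $q\to\infty$ along the line $\partial H_x$, so that $|q-z|/|q-w|\to1$; this shows $\alpha_{H_x}(z,w)\ge\log\frac{|x-w|}{|x-z|}$. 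Hence $\rho_D(z,w)\ge\log\frac{|x-w|}{|x-z|}$ for every $x\in\partial D$, and symmetrically $\rho_D(z,w)\ge\log\frac{|y-z|}{|y-w|}$ for every $y\in\partial D$. Adding and taking the supremum over $x,y\in\partial D$ gives $2\rho_D(z,w)\ge\alpha_D(z,w)$; the factor $2$ is precisely the cost of splitting the two-sided cross ratio into its two one-sided factors.

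For the upper bound I would first record the convex density estimate $\frac1{d(x)}\le\lambda_D(x)\le\frac2{d(x)}$, where $d(x)=\dist(x,\partial D)$ and $\lambda_D$ is the hyperbolic density: the inscribed disk $B(x,d(x))\subseteq D$ and the supporting half-plane $D\subseteq H$ at the foot of the nearest boundary point, together with monotonicity of the density and the explicit densities of a disk and a half-plane, give both sides. Integrating the upper density bound along the segment $[z,w]\subseteq D$ (available by convexity) bounds $\rho_D(z,w)$ by twice the quasihyperbolic distance. The complementary step is a direct lower bound for $\alpha_D$: picking in the defining supremum a boundary point nearest to $z$ and one nearest to $w$ produces $\sh\!\big(\tfrac12\alpha_D(z,w)\big)\gtrsim \frac{|z-w|}{\sqrt{d(z)d(w)}}$, and the same quantity governs an anchored inscribed disk $B(c,R)\subseteq D$ through $\sh\big(\tfrac12\rho_B\big)=\frac{R|z-w|}{\sqrt{(R^2-|z-c|^2)(R^2-|w-c|^2)}}$ together with $\rho_D\le\rho_B=\alpha_B$.

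The main obstacle is the upper bound, and specifically pinning down the constant $4$ and the exact $\sh(\tfrac12\,\cdot)$ shape. One must match two regimes: small separation, where $4\sh(\tfrac12\alpha_D)\approx2\alpha_D$ and the comparison is essentially linear, versus large separation, where the exponential growth of $\sh$ has to absorb the conformal distortion of $\rho_D$. This forces the inscribed disk $B$ to be chosen tangent to $\partial D$ near both nearest-point feet of $z$ and $w$ simultaneously, so that $R^2-|z-c|^2\asymp R\,d(z)$ and $R^2-|w-c|^2\asymp R\,d(w)$ at once. Establishing that such a disk exists for an arbitrary bounded convex domain, and then tracking the comparison constants through the chain $\rho_D\le\rho_B=\alpha_B$ down to $4\sh(\tfrac12\alpha_D)$, is the delicate part; the lower bound, by contrast, is entirely elementary.
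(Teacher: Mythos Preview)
The paper does not prove this statement at all: Theorem \ref{Bthm} is quoted verbatim from Beardon \cite[Thm 6.1]{b2} and used as a black box in the proof of Theorem \ref{qrHolder}. So there is no ``paper's own proof'' to compare against; you are in effect reproducing Beardon's argument.

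Your lower bound is correct and is essentially Beardon's proof. The supporting half-plane $H_x$, the identity $\rho_{H_x}=\alpha_{H_x}$, the limit $q\to\infty$ along $\partial H_x$ to isolate a single factor $\log\frac{|x-w|}{|x-z|}$, and the final addition over the two factors of the cross ratio --- all of this is clean and complete.

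Your upper bound, however, is not a proof but a plan with an acknowledged hole. The density estimate and the segment integration are fine, but the step you flag as ``the delicate part'' --- producing a single inscribed disk simultaneously tangent near the feet of both $z$ and $w$ so that $R^2-|z-c|^2\asymp R\,d(z)$ and $R^2-|w-c|^2\asymp R\,d(w)$ --- is genuinely missing, and in fact such a disk need not exist for an arbitrary bounded convex domain (think of a long thin strip-like region with $z$ and $w$ far apart). Beardon's actual upper-bound argument does not proceed via a single inscribed disk; it instead bounds $\rho_D$ using the density inequality $\lambda_D(x)\le 2/d(x)$ and then compares $\int_{[z,w]}\frac{|dx|}{d(x)}$ directly to the Apollonian quantity by an elementary estimate along the segment, which is what ultimately produces the $4\sh(\tfrac12\alpha_D)$ shape. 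If you want a self-contained proof you should abandon the two-point inscribed-disk idea and follow that route instead.
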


\begin{nonsec}{\bf Open problem.} \label{hilApo}
We do not know whether the lower bound
\(\frac{1}{2}\,\alpha_D(z,w)\) above
could be replaced with \(\alpha_D(z,w)\).
\end{nonsec}

The {\it M\"obius metric}
of a domain $G \subset \overline{ \mathbb{R}}^n$ with ${\rm card} ( \overline{ \mathbb{R}}^n \setminus G) \ge 2,$ is defined  for $a,b \in G$
\begin{equation}\label{ferrmob}
\delta_G(a,b)\equiv \log (1+\sup_{u,v \in \partial G}|u,a,v,b|
\end{equation}
and by \cite[Thm 3.11]{se}
\[
\alpha_G \le \delta_G \le \log(e^{\alpha_G}+2) \,.
\]
For $a,b \in { \mathbb{B}^n}$
\begin{equation}\label{ferrmob2}
\rho_{ \mathbb{B}^n}(a,b) =\delta_{ \mathbb{B}^n}(a,b) \,.
\end{equation}
For further information about this metric, see \cite{se} and
the literature cited in  \cite[pp.73-76]{hkv}.

\begin{nonsec}{\bf Open problem.}
It seems to be an open problem to find explicit
formulas for  $\delta_{G}(a,b)$
  in the case of simple domains like strip, sector, convex polygon.
\end{nonsec}

\medskip
\begin{nonsec}{\bf K-quasiregular mappings.} \cite[p. 288]{hkv}. 
Let \( G \subset \mathbb{R}^n \) be a domain. A mapping \( f : G \to \mathbb{R}^n \) is said to be quasiregular  
if \( f \) is \( \text{ACL}^n \) and if there exists a constant \( K \geq 1 \) such that
\begin{equation}\label{5.2}
|f^\prime(x)|^n \leq K J_f(x), \quad |f^\prime(x)| = \max_{|h| = 1} |f^\prime(x) h|, 
\end{equation}
a.e. in \( G \). Here \( f^\prime(x) \) denotes the formal derivative of \( f \) at \( x \). Suppose that there is also some constant $K \geq 1$ such that the inequality
	\begin{equation}\label{5.3}
		J_f(x) \leq K \ell(f'(x))^n, \quad \ell(f'(x)) = \min_{|h|=1} |f'(x)h| \tag{5.3}
	\end{equation}
too, holds almost everywhere in $G$. The \textit{outer dilation} of $f$, denoted by $K_O(f)$, is the smallest constant $K \geq 1$ for which the inequality (\ref{5.2}) is true and, similarly, the \textit{inner dilation} of $f$, denoted by $K_I(f)$, is the smallest constant $K \geq 1$ such that the inequality (\ref{5.3}) holds. The function $f$ is $K$-\textit{quasiregular}, if $\max\{K_I(f), K_O(f)\} \leq K$.
	\end{nonsec}

\begin{nonsec}{\bf Proof of Theorem \ref{qrHolder}.}
It follows easily from the definition of the Apollonian metric that

\[
h_D(f(a), f(b)) \leq \alpha_D(f(a), f(b)).
\]
On the other hand, by Theorem \ref{Bthm}
we can write
\[
 h_D(f(a), f(b)) \leq \alpha_D(f(a), f(b))\leq 2 \, \rho_D(f(a), f(b)).
\]
By Theorem \ref{2.12} and Corollary \ref{hrho}, we get
\begin{align*}
	h_D(f(a), f(b)) 
	&\leq \alpha_D(f(a), f(b)) \leq 2 \, \rho_D(f(a), f(b)) \\
	&\leq 2\,c(K) \max \left\lbrace \rho_{\mathbb{B}^2}(a, b), \left(\rho_{\mathbb{B}^2}(a, b)\right)^{1/K} \right\rbrace \\
	&\leq 2\,c(K) \max \left\lbrace \frac{h_{\mathbb{B}^2}(a, b)}{\sqrt{1 - m^2}}, \left(\frac{h_{\mathbb{B}^2}(a, b)}{\sqrt{1 - m^2}}\right)^{1/K} \right\rbrace \\
	&\leq \frac{2\,c(K)}{\sqrt{1 - m^2}} \max \left\lbrace h_{\mathbb{B}^2}(a, b), \left(h_{\mathbb{B}^2}(a, b)\right)^{1/K} \right\rbrace.
\end{align*}
The claim follows. \hfill $\square$
\end{nonsec}

\begin{nonsec}{\bf Remark.} \label{sharpness}
 If the open problem \ref{hilApo} could be solved in the affirmative, then the constant
$2 c(K)$ in Theorem
\ref{qrHolder} could be replaced with $ c(K)\,.$ 
%

\end{nonsec}

%



\textbf{Acknowledgements.} \c{S}. Alt\i nkaya is supported by the Scientific and Technological Research Council of T\"{u}rkiye (TUBITAK), Project Number: 1059B192402218. \\
M. Fujimura is partially supported by JSPS KAKENHI Grant Number JP25K07039.

\bigskip

\end{document}